%
\documentclass[a4paper,11pt]{article}
\usepackage{amssymb,amsmath,amsthm}
\usepackage{fullpage}
\usepackage{hyperref}
\usepackage{graphicx}
\usepackage{stackrel}
\usepackage[all]{xy}
\newcommand{\ie}{\emph{i.e.}}
\newcommand{\eg}{\emph{e.g.}}
\newcommand{\cf}{\emph{cf.}}

\newcommand{\Real}{\mathbb{R}}
\newcommand{\Nat}{\mathbb{N}}

\newcommand{\supp}{\mathop{\mathrm{supp}}\nolimits}

\newcommand{\Euler}{\mathop{\mathrm{e}}\nolimits}
\newcommand{\Dom}{\mathsf{D}}

\newcommand{\Hilbert}{\mathcal{H}}
\newcommand{\eps}{\varepsilon}
\newcommand{\sii}{L^2}
\newcommand{\der}{\mathrm{d}}
\newtheorem{Theorem}{Theorem}
\newtheorem{Corollary}{Corollary}
\newtheorem{Proposition}{Proposition}
\newtheorem{Lemma}{Lemma}
\newtheorem{Conjecture}{Conjecture}
\theoremstyle{definition}

\begin{document}
%
\title{\textbf{\Large The Hardy inequality
and the heat flow in curved wedges}}
\author{David Krej\v{c}i\v{r}{\'\i}k}
\date{\small 
\emph{Department of Theoretical Physics,
Nuclear Physics Institute ASCR, \\
25068 \v{R}e\v{z}, Czech Republic; 
krejcirik@ujf.cas.cz}
\medskip \\
11 January 2016}
\maketitle
\begin{abstract}
\noindent
We show that the polynomial decay rate of the heat semigroup 
of the Dirichlet Laplacian in curved planar wedges 
that are obtained as a compactly supported perturbation of straight wedges
equals the sum of the usual dimensional decay rate 
and a multiple of the reciprocal value of the opening angle.
To prove the result, we develop the method of self-similar variables 
for the associated heat equation and study the asymptotic behaviour 
of the transformed non-autonomous parabolic problem for large times.
We also establish an improved Hardy inequality for the Dirichlet Laplacian
in non-trivially curved wedges and state a conjecture about an improved 
decay rate in this case.
\end{abstract}
%

\section{Introduction}
%
Relations between the geometry of a domain,
spectral properties of an associated differential operator
and time evolution of the generated heat flow
are one of the vintage problems of mathematics.
The interest lies in the involvement of different fields of mathematics
(notably differential geometry, spectral theory and partial differential equations)
and a wide scope of applications 
(from classical dissipative systems 
and stochastic analysis to quantum mechanics).
We refer to the classical book of Davies~\cite{Davies_1989}
and the recent monograph of Grigor'yan~\cite{Grigoryan} 
with many references.

In this paper, we are interested in a large time behaviour 
of the heat semigroup associated with the Dirichlet Laplacian
in the geometric setting of two-dimensional unbounded domains 
which are obtained as a curved deformation of 
the \emph{straight wedge}
\begin{equation}\label{wedge0}
  \Omega_0 := \Big\{
  \big(
  r \cos\varphi, r \sin\varphi
  \big)
  \ \Big| \
  r \in (0,\infty) 
  \,, \ 
  \varphi \in (0,2\pi a)
  \Big\}
\end{equation}
with $a \in (0,1]$, see Figure~\ref{Fig}.
The Brownian motion in straight cones has been extensively studied; 
see in particular \cite{DeBlassie_1987}, \cite{Banuelos-Smits_1997}
and the recent review~\cite{Denisov-Wachtel_2015} with many further references.
However, the present curved feature of the boundary seems to be a new aspect.
Our main result about the heat semigroup (Theorem~\ref{Thm.main})
says that the polynomial 
decay rate is insensitive to our specific
compactly supported perturbations of the straight wedges.
Although the result is perhaps heuristically expectable, 
it is still non-trivial
because there is no general theory of properties of the heat semigroup
under compactly supported perturbations.
Moreover, the result admits an interesting stochastic interpretation
in terms of properties of the Brownian motion.

It is well known that the large time behaviour of the heat kernel
is related to transient/re\-cur\-rent properties of the Brownian motion
and spectral-threshold characteristics of the generator;
see, \eg, Pinsky's monograph~\cite{Pinsky} 
and a recent survey of Pinchover~\cite{Pinchover_2013}.
One way how to characterise the latter in our context
is through the existence/non-existence of the Hardy inequality
for the Dirichlet Laplacian.
There is an extensive literature on Hardy inequalities in conical domains
(see~\cite{Devyver-Pinchover-Psaradakis} and references therein),
but curved wedges do not seem to be considered in the present context.
Using a classical Hardy inequality for simply connected domains 
with a Hardy weight expressed by the distance to the boundary
(see, \eg, \cite{Ancona_1986}, \cite{Davies_1995}, 
\cite{Davies_1999}, \cite{Laptev-Sobolev_2008}),
one can instantly obtain a number of Hardy inequalities in our curved wedges.
Using curvilinear coordinates, it is also possible
to get a Hardy inequality
with a Hardy weight expressed by the distance to the conical singularity
which becomes optimal for straight wedges.
In this paper, we go beyond these immediate inequalities
and establish an improved Hardy inequality which holds
if, and only if, the wedge is curved (Theorem~\ref{Thm.Hardy}).

To state the main results of this paper, 
let us first introduce a general class of curved wedges.  
Given a function $\theta:(0,\infty)\to\Real$
and a number $a \in (0,1]$, let us consider 
the two-dimensional domain
\begin{equation}\label{wedge}
  \Omega := \Big\{
  \big(
  r \cos[\varphi+\theta(r)], r \sin[\varphi+\theta(r)]
  \big)
  \ \Big| \
  r \in (0,\infty) 
  \,, \ 
  \varphi \in (0,2\pi a)
  \Big\}
  \,.
\end{equation}
We call~$\Omega$ a \emph{curved wedge} of \emph{opening angle}~$2\pi a$.
We are primarily interested in the large time behaviour of the heat semigroup
\begin{equation}\label{semigroup}
  \Euler^{t \Delta_D^{\Omega}}
  \,,
\end{equation}
where $-\Delta_D^{\Omega}$ 
denotes the (non-negative) Dirichlet Laplacian in $\sii(\Omega)$.

The geometry of a curved wedge can be quite complex, see Figure~\ref{Fig}.
In fact, any type of unbounded domain from Glazman's classification
\cite[Sec.~49]{Glazman} (see also \cite[Thm.~X.6.1]{Edmunds-Evans})
can be realised: 
quasi-conical (\ie~containing arbitrarily large disks), 
quasi-cylindrical (\ie~not quasi-conical but containing
a sequence of identical pairwise disjoint disks)
and quasi-bounded (neither quasi-conical nor quasi-cylindrical).
A characteristic assumption of this paper is that the derivative~$\theta'$
vanishes at infinity, 
so that~$\Omega$ is a local perturbation of the straight wedge~\eqref{wedge0}.
More specifically, we assume that $\theta \in C^1((0,\infty))$ 
is such that
\begin{equation}\label{local}
  \lim_{r \to \infty} \theta'(r) = 0
  \,.
\end{equation}
For a majority of the results in this paper,
we actually require that~$\theta'$ decays at infinity fast enough (\cf~\eqref{Ass})
or even that $\theta'(r)=0$ for all sufficiently large~$r$. 
Then~$\Omega$ is a compactly supported perturbation of the straight wedge~$\Omega_0$.
In particular, $\Omega$~shares the property of~$\Omega_0$
being quasi-conical, 
and consequently (see, \eg, \cite[Thm.~X.6.5]{Edmunds-Evans})
\begin{equation}\label{spectrum}
  \sigma(-\Delta_D^{\Omega})
  = \sigma_\mathrm{ess}(-\Delta_D^{\Omega})
  = [0,\infty)
  \,.
\end{equation}
It follows from the spectral mapping theorem that 
$
  \big\|\Euler^{t \Delta_D^{\Omega}}\big\|_{\sii(\Omega) \to \sii(\Omega)} = 1
$
for all $t \geq 0$.

To reveal a decay of the heat semigroup in a more adapted topology, 
we introduce a weighted space
\begin{equation}\label{weighted}
  \sii_w(\Omega) := \sii(\Omega,w(x)\,\der x)
  \,, \qquad \mbox{where} \qquad
  w(x) := \Euler^{|x|^2/4}
  \,,
\end{equation}
and reconsider~\eqref{semigroup} as an operator 
from $\sii_w(\Omega) \subset \sii(\Omega)$ to~$\sii(\Omega)$.
As a measure of the decay of the heat semigroup,
we then consider the polynomial \emph{decay rate}
\begin{equation}\label{rate}
  \Gamma_{\theta,a}
  := \sup \Big\{ \gamma \left| \
  \exists C_\gamma > 0, \, \forall t \geq 0, \
  \big\|\Euler^{t \Delta_D^\Omega}\big\|_{
  \sii_w(\Omega)
  \to
  \sii(\Omega)
  }
  \leq C_\gamma \, (1+t)^{-\gamma}
  \Big\} \right.
  .
\end{equation}

Our main result reads as follows.
\begin{Theorem}\label{Thm.main}
Let $\theta \in C^1((0,\infty))$ be such that 
\begin{equation}\label{Ass}
  r \mapsto r\,\theta'(r) \in L^\infty((0,\infty))
\end{equation}
and $\supp\theta'$ is compact in~$\Real$.
Then
\begin{equation}\label{result}
  \Gamma_{\theta,a} = \frac{1}{2}+ \frac{1}{4a} 
  \,.
\end{equation}
\end{Theorem}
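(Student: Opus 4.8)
The plan is to reduce the heat equation to the fixed straight wedge~$\Omega_0$ and then to develop the method of self-similar variables. First I would pass to the curvilinear coordinates $(r,\varphi) \in \Omega_0$ determined by~\eqref{wedge}: one checks that the Euclidean area element equals $r\,\der r\,\der\varphi$ (the Jacobian is~$r$, exactly as in the uncurved case) and that the Dirichlet form of $-\Delta_D^\Omega$ becomes
\[
  \int_{\Omega_0}
  \Big(
  \big| (\partial_r - \theta'\,\partial_\varphi) u \big|^2
  + \frac{|\partial_\varphi u|^2}{r^2}
  \Big)
  r\,\der r\,\der\varphi
  .
\]
Hence $\Euler^{t\Delta_D^\Omega}$ is unitarily equivalent to $\Euler^{-t H_\theta}$ with $H_\theta$ the associated self-adjoint operator in $\sii(\Omega_0, r\,\der r\,\der\varphi)$, and $\sii_w(\Omega)$ is mapped onto $\sii(\Omega_0, \Euler^{r^2/4}\,r\,\der r\,\der\varphi)$. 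Next I would write the solution of the heat equation in the self-similar form $(1+t)^{-1/2}\,v\big(\cdot/\sqrt{1+t},\log(1+t)\big)$. Since the dilation $x\mapsto x/\sqrt{1+t}$ maps~$\Omega$ onto the curved wedge with angle function $r\mapsto\theta(\sqrt{1+t}\,r)$, the rescaled function~$v$ obeys a \emph{non-autonomous} parabolic problem $\partial_s v = -\mathcal{L}(s)v$ on the \emph{fixed} domain~$\Omega_0$, whose generator $\mathcal{L}(s)$ acts in the time-independent space $\mathcal{H}:=\sii(\Omega_0,\Euler^{|y|^2/4}\,\der y)$ and differs from the straight-wedge generator $\mathcal{L}_\infty := -\Delta_{\Omega_0} - \tfrac12\,y\cdot\nabla_y - \tfrac12$ only through terms carrying the coefficient $\beta_s(r):=\sqrt{1+t}\,\theta'(\sqrt{1+t}\,r)$. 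Crucially, $r\,\beta_s(r)=(\sqrt{1+t}\,r)\,\theta'(\sqrt{1+t}\,r)$ is bounded uniformly in~$s$ by $\|r\mapsto r\theta'(r)\|_{L^\infty}$ and, because $\supp\theta'$ is compact, vanishes outside the shrinking set $\{r\lesssim\Euler^{-s/2}\}$, so the forms of $\mathcal{L}(s)$ converge to that of $\mathcal{L}_\infty$ as $s\to\infty$.

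The second step is the spectral analysis of the limit operator $\mathcal{L}_\infty$. Separating variables in polar coordinates, $v=R(r)\sin\!\big(n\varphi/(2a)\big)$ with $n\in\Nat$, the radial part is a Laguerre-type operator; a direct substitution shows that $\psi_1(r,\varphi):=r^{1/(2a)}\sin\!\big(\varphi/(2a)\big)\,\Euler^{-r^2/4}$ is a positive eigenfunction of $-\Delta_{\Omega_0}-\tfrac12\,y\cdot\nabla_y$ in~$\mathcal{H}$ with eigenvalue $\lambda_1=1+\tfrac1{4a}$, hence the bottom of its spectrum. The spectrum is purely discrete with a gap above~$\lambda_1$, as one sees after the unitary transformation $v\mapsto\Euler^{|y|^2/8}v$, which turns $-\Delta_{\Omega_0}-\tfrac12\,y\cdot\nabla_y$ into the two-dimensional harmonic oscillator $-\Delta+\tfrac1{16}|y|^2+\tfrac12$ on $\sii(\Omega_0)$. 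Thus $\mathcal{L}_\infty$ has the simple lowest eigenvalue $\mu_1:=\lambda_1-\tfrac12=\tfrac12+\tfrac1{4a}$ with eigenfunction~$\psi_1$ and a spectral gap above it; here the summand $\tfrac12$ is the dimensional contribution $d/4$ with $d=2$. Since the self-similar substitution with prefactor $(1+t)^{-1/2}$ preserves the $\sii(\Omega)$-norm, one has $\big\|\Euler^{t\Delta_D^\Omega}u_0\big\|_{\sii(\Omega)}=\|v(s)\|_{\sii(\Omega_0)}$ with $s=\log(1+t)$, so the decay rate~\eqref{rate} equals the exponential rate of decay of $\|v(s)\|_{\sii(\Omega_0)}$ as $s\to\infty$; the identity~\eqref{result} thus amounts to showing that this rate is exactly~$\mu_1$.

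For the bound $\Gamma_{\theta,a}\ge\tfrac12+\tfrac1{4a}$, I would prove $\|v(s)\|_{\mathcal{H}}\le C\,\Euler^{-\mu_1 s}\|v(0)\|_{\mathcal{H}}$, which via $\|v(s)\|_{\sii(\Omega_0)}\le\|v(s)\|_{\mathcal{H}}$ and $\|v(0)\|_{\mathcal{H}}=\|u_0\|_{\sii_w(\Omega)}$ gives $\big\|\Euler^{t\Delta_D^\Omega}\big\|_{\sii_w(\Omega)\to\sii(\Omega)}\le C\,(1+t)^{-\mu_1}$. Differentiating $\|v(s)\|_{\mathcal{H}}^2$ along the equation reduces this to a coercivity estimate $\langle\mathcal{L}(s)v,v\rangle_{\mathcal{H}}\ge\mu_1\|v\|_{\mathcal{H}}^2-\eps(s)\big(\|v\|_{\mathcal{H}}^2+\|\mathcal{L}_\infty v\|_{\mathcal{H}}^2\big)$ with $\eps$ integrable on $(0,\infty)$, combined with a parabolic-regularity bound controlling $\|\mathcal{L}_\infty v(s)\|_{\mathcal{H}}$ at the same rate as $\|v(s)\|_{\mathcal{H}}$, uniformly in~$s$. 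The coercivity estimate is where the geometry enters: by Cauchy--Schwarz the defect $\langle(\mathcal{L}_\infty-\mathcal{L}(s))v,v\rangle_{\mathcal{H}}$ is controlled by the radial Dirichlet energy of~$v$ on the annulus $\{r\lesssim\Euler^{-s/2}\}$, and since every~$v\in\Dom(\mathcal{L}_\infty)$ behaves like $c\,r^{1/(2a)}\sin\!\big(\varphi/(2a)\big)$ near the vertex with $|c|\lesssim\|\mathcal{L}_\infty v\|_{\mathcal{H}}$, this energy is $O(\Euler^{-s/(2a)})\,\|\mathcal{L}_\infty v\|_{\mathcal{H}}^2$; a Gronwall argument (the forcing being subdominant because $2\mu_1+\tfrac1{2a}>2\mu_1$) then yields the exponential decay.

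For the matching bound $\Gamma_{\theta,a}\le\tfrac12+\tfrac1{4a}$, I would test with an initial datum whose reduced image $v(0)$ has a nonzero component along~$\psi_1$. Setting $\phi(s):=\langle v(s),\psi_1\rangle_{\mathcal{H}}$ and using $\mathcal{L}_\infty\psi_1=\mu_1\psi_1$, one gets $\phi'(s)=-\mu_1\phi(s)-\langle v(s),(\mathcal{L}(s)-\mathcal{L}_\infty)\psi_1\rangle_{\mathcal{H}}$; since $\psi_1$ vanishes to order $1/(2a)$ at the vertex while $\mathcal{L}(s)-\mathcal{L}_\infty$ lives on $\{r\lesssim\Euler^{-s/2}\}$, the remainder is $\Euler^{-\mu_1 s}$ times a factor summable in~$s$, so $\Euler^{\mu_1 s}\phi(s)$ has a finite limit, which can be arranged nonzero by the choice of~$u_0$; hence $\|v(s)\|_{\sii(\Omega_0)}\gtrsim\Euler^{-\mu_1 s}$. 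The main obstacle throughout is the uniform-in-$s$ coercivity with an \emph{integrable} error, together with the accompanying regularity: after the self-similar rescaling the coefficient~$\beta_s$ does not become small in sup-norm---only its support shrinks towards the conical vertex---so $\mathcal{L}(s)-\mathcal{L}_\infty$ cannot be handled as a small relatively bounded perturbation, and one must instead exploit the vanishing of $\Dom(\mathcal{L}_\infty)$ at the vertex together with a bootstrap in the graph norm; this is precisely where the hypotheses $r\,\theta'\in L^\infty$ and the compactness of $\supp\theta'$ are used.
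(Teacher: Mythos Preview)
Your overall framework matches the paper: curvilinear coordinates identifying $\Omega$ with the straight wedge, self-similar variables producing a non-autonomous generator $\mathcal{L}(s)$ that converges to a harmonic-oscillator-type limit $\mathcal{L}_\infty$ with bottom eigenvalue $\mu_1=\tfrac12+\tfrac1{4a}$. The differences are in how the two inequalities are extracted.

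For the lower bound the paper's route is considerably shorter. After the Gaussian conjugation, the skew part $M_s=-\tfrac12\rho\,\theta_s'(\rho)\,\partial_\varphi$ drops out of the energy identity, leaving $\tfrac12\tfrac{\der}{\der s}\|\phi(s)\|_{\Hilbert}^2=-l_s[\phi(s)]\le-\lambda_{\theta,a}(s)\|\phi(s)\|_{\Hilbert}^2$, where $\lambda_{\theta,a}(s)=\inf\sigma(L_s)$. Gronwall then gives $\Gamma_{\theta,a}\ge\liminf_{s\to\infty}\lambda_{\theta,a}(s)$, and norm-resolvent convergence $L_s\to L$ (which needs only~\eqref{Ass} and the compact support of~$\theta'$) yields $\lambda_{\theta,a}(s)\to\mu_1$. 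Since $\Gamma_{\theta,a}$ is defined as a \emph{supremum} of admissible exponents, this already proves $\Gamma_{\theta,a}\ge\mu_1$; no integrable error, no vertex asymptotics, and no graph-norm parabolic regularity are required. Your route instead aims at the sharper estimate $\|v(s)\|_{\mathcal{H}}\le C\,\Euler^{-\mu_1 s}\|v(0)\|_{\mathcal{H}}$ with a fixed constant---strictly more than Theorem~\ref{Thm.main} asserts---and the ingredients you yourself identify as the ``main obstacle'' (the precise vertex behaviour $v\sim c\,r^{1/(2a)}\sin(\varphi/(2a))$ for $v\in\Dom(\mathcal{L}_\infty)$ with $|c|\lesssim\|\mathcal{L}_\infty v\|_{\mathcal{H}}$, and the uniform non-autonomous bootstrap controlling $\|\mathcal{L}_\infty v(s)\|_{\mathcal{H}}$) are genuine additional work that the sketch does not carry out and that the theorem does not need.

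For the upper bound the paper also bypasses your projection/ODE argument. Taking as initial datum the ground state $\phi_0$ of~$L$ and using that $\theta_s'(\rho)=0$ for $\rho\ge R\,\Euler^{-s/2}$, the transformed solution equals $\Euler^{-\mu_1 s}\phi_0$ on that expanding region; restricting the $\Hilbert$-norm to it gives $\|\psi(t)\|_{\Hilbert}\ge c\,(1+t)^{-\mu_1}$ directly. Your approach via $\phi(s)=\langle v(s),\psi_1\rangle_{\mathcal{H}}$ would also work in principle, but it again hinges on quantitative vertex estimates to make the forcing term summable.
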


We remark that~\eqref{Ass} implies the asymptotic 
behaviour at infinity~\eqref{local},
while singularities of~$\theta'$ at zero are allowed by this hypothesis. 

The~$1/2$ on the right hand side of~\eqref{result}
is the usual power in the polynomial decay rate 
of the heat semigroup in~$\Real^2$
(more generally, one has~$d/4$ in~$\Real^d$),
while the additional $a$-dependent term is an improvement
due to the extra Dirichlet boundary conditions.
The results agree with the decay rates for the straight wedges, 
where the heat kernel can be written down explicitly 
by a separation of variables
(see~\cite[p.~379]{Carslaw-Jaeger} 
or more generally~\cite[Lem.~1]{Banuelos-Smits_1997}).
Despite of the existence of an extensive literature on properties 
of the Brownian motion in straight cones
(see in particular \cite{DeBlassie_1987}, \cite{Banuelos-Smits_1997}
and the recent review~\cite{Denisov-Wachtel_2015} with many further references),
the present Theorem~\ref{Thm.main} seems to be new 
because of the curved feature of the boundary of~$\Omega$.

We expect that the conclusion of Theorem~\ref{Thm.main}
remains valid even if~$\theta'$ is not compactly supported
but it decays to zero sufficiently fast at infinity.
On the other hand, 
it is possible that the decay of the heat semigroup
is faster than polynomial provided that~$\theta'$ 
decays to zero very slowly at infinity.
This conjecture is supported by subexponential asymptotics of the heat kernel 
of Schr\"odinger operators with slowly decreasing potentials 
\cite{Yafaev_1982}
and related properties of the Brownian motion in parabolic domains
\cite{Banuelos-DeBlassie-Smits_2001}, \cite{Lifshits-Shi_2002}, 
\cite{Li_2003}, \cite{Berg_2003}.

The statement of Theorem~\ref{Thm.main} for solutions~$u$ of
the heat equation 
\begin{equation}\label{Cauchy}
\left\{
\begin{aligned}
  \partial_t u -\Delta_D^\Omega \;\! u &= 0 \,,
  \\
  u(0) &= u_0 \,,
\end{aligned}
\right.
\end{equation}
can be reformulated as follows.
If $u_0 \in \sii_w(\Omega)$, then, for every $\delta>0$, 
there exists a positive constant~$C_\delta$ 
(depending in addition to~$\delta$ on~$a$)
such that
\begin{equation}\label{solution.rate}
  \|u(t)\|_{\sii(\Omega)}
  \leq C_\delta \, (1+t)^{-\Gamma_{\theta,a}+\delta} \,
  \|u_0\|_{\sii_w(\Omega)}
\end{equation}
for each time $t \geq 0$ and any initial datum $u_0 \in \sii_w(\Omega)$.
The constant~$C_\delta$ can in principle explodes as $\delta \to 0$,
but we expect that it can be actually made independent of~$\delta$ in the limit,
relying on other situations where the method of proof that we use 
is known to give optimal decay rates (see below).

From~\eqref{solution.rate} it is possible to deduce
the following pointwise bound.
\begin{Corollary}\label{Thm.point}
Let $\theta \in C^1((0,\infty))$ be such that~\eqref{Ass} holds  
and $\supp\theta'$ is compact in~$\Real$.
For any positive number~$\delta$, 
there exists a constant $\tilde{C}_{\delta}$
such that the solution~$u$ of~\eqref{Cauchy} 
with an arbitrary initial datum $u_0 \in \sii_w(\Omega)$ obeys
\begin{equation}\label{result.point}
  \forall t \geq 1 \,, \qquad
  \|u(t)\|_{L^\infty(\Omega)}
  \leq 
  \tilde{C}_{\delta} \, t^{-\Gamma_{\theta,a}+\delta} \,
  \|u_0\|_{\sii_w(\Omega)} 
  \,,
\end{equation}
where~$\Gamma_{\theta,a}$ is given by~\eqref{result}.
\end{Corollary}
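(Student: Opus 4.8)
I need to prove Corollary 1 (the pointwise/$L^\infty$ bound), given Theorem 1 (the $L^2_w \to L^2$ decay rate, via estimate (14) which is `solution.rate`). So I have at my disposal: for $u_0 \in L^2_w(\Omega)$, $\|u(t)\|_{L^2(\Omega)} \le C_\delta (1+t)^{-\Gamma_{\theta,a}+\delta}\|u_0\|_{L^2_w(\Omega)}$.

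The standard trick to upgrade $L^2 \to L^\infty$: use the semigroup property. Write $u(t) = e^{t\Delta_D^\Omega} u_0 = e^{(t/2)\Delta_D^\Omega} e^{(t/2)\Delta_D^\Omega} u_0$. The first factor gives $L^2 \to L^2$ decay (Theorem 1 applied at time $t/2$), and the second needs an $L^2 \to L^\infty$ smoothing estimate for the Dirichlet heat semigroup. The key ingredient here is the ultracontractivity of $e^{t\Delta_D^\Omega}$: since $\Omega \subset \mathbb{R}^2$, domination by the free heat semigroup on $\mathbb{R}^2$ gives $\|e^{t\Delta_D^\Omega}\|_{L^2(\Omega)\to L^\infty(\Omega)} \le \|e^{t\Delta}\|_{L^2(\mathbb{R}^2)\to L^\infty(\mathbb{R}^2)} = C t^{-1/2}$ (indeed $\le (4\pi t)^{-d/4}$ with $d=2$, so $(4\pi t)^{-1/2}$). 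Then for $t \ge 1$:
$$
\|u(t)\|_{L^\infty(\Omega)} \le \|e^{(t/2)\Delta_D^\Omega}\|_{L^2\to L^\infty}\,\|e^{(t/2)\Delta_D^\Omega}u_0\|_{L^2(\Omega)} \le C (t/2)^{-1/2}\, C_\delta (1+t/2)^{-\Gamma_{\theta,a}+\delta}\|u_0\|_{L^2_w(\Omega)}.
$$
For $t\ge 1$ this is bounded by $\tilde C_\delta\, t^{-1/2-\Gamma_{\theta,a}+\delta}\|u_0\|_{L^2_w}$. That has an extra $t^{-1/2}$ — stronger than claimed. So I can afford to absorb that extra power into a redefinition, or (cleaner) just note that $t^{-1/2-\Gamma+\delta} \le t^{-\Gamma+\delta}$ for $t\ge 1$, which immediately gives (15) as stated. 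Actually, the cleanest route: split as $e^{t\Delta} = e^{(t-1)\Delta}e^{\Delta}$ for $t \ge 1$, apply Theorem 1 to the first factor at time $t-1$ (giving $(1+t-1)^{-\Gamma+\delta} = t^{-\Gamma+\delta}$, matching the RHS exactly), and use that $\|e^{\Delta}\|_{L^2\to L^\infty}$ is a fixed finite constant by ultracontractivity. This avoids any factor-of-two bookkeeping and produces precisely the stated bound with $\tilde C_\delta = \|e^{\Delta_D^\Omega}\|_{L^2\to L^\infty} \cdot C_\delta$.

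So the key steps, in order: (i) recall/justify ultracontractivity of the Dirichlet heat semigroup on $\Omega$, via the pointwise domination of the Dirichlet heat kernel by the Gaussian kernel on $\mathbb{R}^2$, giving $\|e^{\Delta_D^\Omega}\|_{L^2(\Omega)\to L^\infty(\Omega)} \le (4\pi)^{-1/2} =: M < \infty$; (ii) for $t \ge 1$ factor $e^{t\Delta_D^\Omega} = e^{\Delta_D^\Omega}\, e^{(t-1)\Delta_D^\Omega}$ using the semigroup law; (iii) estimate $\|u(t)\|_{L^\infty} \le M \,\|e^{(t-1)\Delta_D^\Omega} u_0\|_{L^2(\Omega)}$ and apply the already-established bound (14) of Theorem 1 at time $t-1 \ge 0$, yielding $\|u(t)\|_{L^\infty(\Omega)} \le M\, C_\delta\, t^{-\Gamma_{\theta,a}+\delta}\|u_0\|_{L^2_w(\Omega)}$; (iv) set $\tilde C_\delta := M\, C_\delta$.

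**Main obstacle.** There is essentially no hard step — this is a routine bootstrap, and the excerpt even announces the bound is deduced "from~\eqref{solution.rate}". The only point requiring a line of care is the ultracontractivity: one should state why the Dirichlet heat semigroup on the (possibly quite irregular, merely quasi-conical) domain $\Omega$ has the $L^2\to L^\infty$ smoothing of $\mathbb{R}^2$. This follows from the general comparison principle for Dirichlet heat kernels of open subsets of $\mathbb{R}^d$ (the kernel of $e^{t\Delta_D^\Omega}$ is pointwise dominated by the free Gaussian kernel, by the maximum principle / Trotter–Kato, or equivalently by positivity of $e^{t\Delta}$ and Duhamel), hence $\sup_{x}\int_\Omega |k_\Omega(t,x,y)|^2\,dy \le \sup_x \int_{\mathbb{R}^2} G_t(x-y)^2\,dy = (8\pi t)^{-1}$, which is exactly the $L^1 \to L^\infty$ bound $\|e^{t\Delta_D^\Omega}\|_{1\to\infty}\le (4\pi t)^{-1}$ interpolated/halved to $L^2\to L^\infty$. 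I would cite Davies~\cite{Davies_1989} or Grigor'yan~\cite{Grigoryan} for this standard fact rather than prove it. Everything else is the three-line computation above.
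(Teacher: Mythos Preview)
Your proposal is correct and essentially identical to the paper's approach: the paper proves a slightly more general statement (Theorem~\ref{Thm.point.bis}, with an arbitrary $\eps>0$ in place of your fixed time shift~$1$) by the same splitting $u(t)=\Euler^{\eps\Delta_D^\Omega}u(t-\eps)$, the same Gaussian domination of the Dirichlet heat kernel (citing \cite[Thm.~2.1.6]{Davies_1989}), and the same application of the Schwarz inequality followed by~\eqref{solution.rate}; Corollary~\ref{Thm.point} is then the special case $\eps=1$, exactly as you set it up. The only cosmetic slip is the constant for $\|\Euler^{\Delta_D^\Omega}\|_{L^2\to L^\infty}$, which is $(8\pi)^{-1/2}$ rather than $(4\pi)^{-1/2}$ (you compute it correctly later in your discussion).
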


Our proof of Theorem~\ref{Thm.main} is based on the method
of self-similar variables that was originally developed 
for the heat equation in the whole Euclidean space 
by Escobedo and Kavian in~\cite{Escobedo-Kavian_1987} 
and subsequently applied to the heat equation with
variable coefficients in numerous works;
we refer to the recent paper~\cite{CK} for an extensive reference list.
The method was also applied to the heat equation 
in non-trivial geometries, 
namely in twisted tubes in~\cite{KZ1} and~\cite{KZ2}
and in curved manifolds in~\cite{KKolb}. 
We shall see that the present problem exhibits certain
similarities with the problem in twisted tubes 
when suitable curvilinear coordinates are applied,
\cf~Section~\ref{Sec.curve}.

Theorem~\ref{Thm.main} and its Corollary~\ref{Thm.point}
are proved in the following Section~\ref{Sec.proof}. 
Theorem~\ref{Thm.main} follows as a consequence 
of lower (Theorem~\ref{Thm.main.lbound}) 
and upper (Theorem~\ref{Thm.optimal}) bounds to the decay rate.
Corollary~\ref{Thm.point} is a special case of 
a more general result (Theorem~\ref{Thm.point.bis}).

To the end of the paper we append Section~\ref{Sec.end},
where we raise a conjecture about an improved 
(possibly non-polynomial) decay rate
in non-trivially curved wedges (\ie~$\theta'\not=0$)
with respect to the straight wedges.
In fact, although the spectrum~\eqref{spectrum} 
is insensitive to variations of the boundary of~$\Omega$
provided that~\eqref{local} holds, 	
there is an improved Hardy inequality  
in the former case:

\begin{Theorem}\label{Thm.Hardy}
Let $\theta \in C^1([0,\infty))$ 
be such that $\supp\theta'$ is compact in~$\Real$. 
If $\theta' \not= 0$,
then there exists a positive constant~$c$ such that  
\begin{equation}\label{Hardy}
  \forall  u \in H_0^1(\Omega)
  \,, \qquad
  \int_{\Omega} |\nabla u(x)|^2 \, \der x
  - \frac{1}{4 a^2} 
  \int_{\Omega} \frac{|u(x)|^2}{|x|^2} \, \der x 
  \geq 
  c \int_{\Omega} \frac{|u(x)|^2}{1+|x|^2\log^2(|x|)} \, \der x 
  \,.
\end{equation}
\end{Theorem}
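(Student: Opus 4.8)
The plan is to pass to the curvilinear coordinates $(r,\varphi)$ that straighten the curved wedge~$\Omega$ into the straight wedge~$\Omega_0$, reducing the left-hand side of~\eqref{Hardy} to a quadratic form on $\Omega_0$ with an extra first-order (magnetic-type) term generated by $\theta'$, and then to exploit the fact that this extra term cannot vanish when $\theta'\not\equiv0$. Concretely, the map $(r,\varphi)\mapsto(r\cos[\varphi+\theta(r)],r\sin[\varphi+\theta(r)])$ is a diffeomorphism of $\Omega_0$ onto $\Omega$ with Jacobian $r$, and a computation of the pulled-back gradient shows
\begin{equation*}
  \int_{\Omega} |\nabla u|^2 \, \der x
  = \int_{\Omega_0} \left( |\partial_r v|^2 + \frac{1}{r^2}\bigl|(\partial_\varphi + r\theta'(r)\,\partial_r)v\bigr|^2 \right) r \, \der r \, \der\varphi ,
\end{equation*}
where $v(r,\varphi) := u(r\cos[\varphi+\theta(r)],r\sin[\varphi+\theta(r)])$, while $\int_\Omega |u|^2/|x|^2\,\der x = \int_{\Omega_0} |v|^2 r^{-1}\,\der r\,\der\varphi$ is unchanged. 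Hypothesis~\eqref{Ass} (here the compact support of $\theta'$) guarantees $r\theta' \in L^\infty$, so the form is well defined on $H_0^1(\Omega_0)$, which is the image of $H_0^1(\Omega)$.

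Next I would separate variables in the angular direction. Since $v(r,\cdot)\in H_0^1((0,2\pi a))$ for a.e.\ $r$, the one-dimensional Dirichlet inequality gives $\int_0^{2\pi a}|\partial_\varphi v|^2\,\der\varphi \geq (2a)^{-2}\int_0^{2\pi a}|v|^2\,\der\varphi$ with constant $1/(2a)^2$, and more precisely, expanding $v$ in the eigenbasis $\{\sin(k\varphi/(2a))\}_{k\geq1}$ of $-\partial_\varphi^2$ on $(0,2\pi a)$, the ground-state component (the $k=1$ mode) is the only one that can saturate the classical conical Hardy inequality $\int_\Omega|\nabla u|^2 \geq \tfrac{1}{4a^2}\int_\Omega |u|^2/|x|^2$. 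The cross term $\int (1/r^2)\cdot 2\,\mathrm{Re}\,\overline{(\partial_\varphi v)}\, r\theta'\partial_r v\,r\,\der r\,\der\varphi$ vanishes on the ground-state mode after integrating by parts in $\varphi$ (it is odd in the angular variable around the symmetric profile), so one is left, on that mode, with
\begin{equation*}
  \int_{\Omega} |\nabla u|^2 \, \der x - \frac{1}{4a^2}\int_\Omega \frac{|u|^2}{|x|^2}\,\der x
  \;\geq\; \int_0^\infty |\theta'(r)|^2 \, |v_1(r)|^2 \, r \, \der r + (\text{positive higher-mode remainder}),
\end{equation*}
where $v_1$ is the ground-state coefficient (modulo normalisation of the angular eigenfunction). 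Thus the deficit is bounded below by a genuine Hardy weight supported where $\theta'\not=0$, and the remaining task is to propagate this local positivity to the global logarithmic weight on the right of~\eqref{Hardy}.

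For that last step I would invoke a one-dimensional Hardy-type inequality with logarithmic weight for the radial operator $-\partial_r^2$ (equivalently, the classical inequality $\int_0^\infty |f'|^2 r\,\der r \geq \tfrac14\int_1^\infty |f|^2/(r\log^2 r)\,\der r$ plus its behaviour near $r=0$), combined with a partition-of-unity/gluing argument: near and on $\supp\theta'$ the term $\int |\theta'|^2|v_1|^2 r\,\der r$ dominates, and away from it the radial kinetic energy left over after subtracting the conical Hardy term controls the logarithmic weight. A standard compactness-free argument — splitting $\Real_+$ into the compact set $\{r \leq R\}$ containing $\supp\theta'$ and the exterior $\{r>R\}$, using the $\theta'$-term on the former and a radial logarithmic Hardy inequality on the latter, and checking the two estimates can be added with a common small constant $c>0$ — finishes the proof. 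The main obstacle I anticipate is controlling the interaction between the angular higher modes and the $r\theta'\partial_r$ term (the term is not a small perturbation a priori, only bounded), and ensuring that subtracting the full conical Hardy constant $1/(4a^2)$ still leaves something nonnegative in the higher modes; this should follow because higher angular modes have strictly larger angular eigenvalues $k^2/(4a^2)$, giving a spectral gap that absorbs the cross term via Cauchy–Schwarz, but making the constant $c$ explicit and uniform will require care.
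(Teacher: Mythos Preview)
Your overall architecture matches the paper's: pass to curvilinear coordinates, study the deficit form $\tilde h[\psi]:=h[\psi]-(4a^2)^{-1}\|\psi/r\|^2_{\Hilbert}$, establish local positivity on a bounded truncation $U_R\supset\supp\theta'$, and upgrade to the global logarithmic weight via a one-dimensional radial Hardy inequality plus a cutoff. Two points need correction or comparison.

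First, the transformed Dirichlet form you display is wrong. Inverting the metric~\eqref{metric} gives
\[
h[\psi]=\int_U\Bigl[\bigl|(\partial_r-\theta'(r)\,\partial_\varphi)\psi\bigr|^2+\Bigl|\frac{\partial_\varphi\psi}{r}\Bigr|^2\Bigr]\,r\,\der r\,\der\varphi
\]
(see Proposition~\ref{Prop.domain}); the $\theta'$-correction multiplies $\partial_\varphi$ inside the \emph{first} square, not $\partial_r$ inside the second. With the correct form, your ground-state computation on $\psi=g(r)\sin(\varphi/(2a))$ yields the deficit $\int_0^\infty[|g'|^2+(4a^2)^{-1}\theta'^2 g^2]\,r\,\der r$, which is the right expression (and not what your own displayed form would give).

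Second, the paper obtains the local positivity differently from your angular-mode decomposition, in a way that completely sidesteps the cross-mode coupling you flag as the obstacle. It sets $\lambda(R,\theta'):=\inf_{\psi\neq 0}\tilde h[\psi]/\|\psi\|^2_{L^2(U_R)}$ (Neumann at $r=R$), uses boundedness of $U_R$ to get a minimiser, and argues by contradiction: if $\lambda(R,\theta')=0$, the Poincar\'e inequality~\eqref{Poincare.elementary} forces the minimiser into the first angular mode, and then vanishing of $\|(\partial_r-\theta'\partial_\varphi)\psi\|^2$ forces $g'=0$ and $\theta' g=0$, hence $\theta'\equiv 0$ on $(0,R)$ (Proposition~\ref{Prop.positive}). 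No bookkeeping of the off-diagonal terms $\langle\chi_j,\chi_k'\rangle\int\theta' g_j'g_k\,r\,\der r$ is required. Your direct mode-by-mode route is not wrong in spirit, but the step you yourself mark as uncertain---absorbing the mixed terms via the higher-mode spectral gap---is left genuinely open in your proposal, whereas the variational/compactness argument closes it in two lines. Your global gluing step (cutoff near $r=R$, the one-dimensional log-Hardy inequality of Lemma~\ref{Lem.log} outside, an elementary bound $\tilde h[\psi]\geq\epsilon\|\partial_r\psi\|^2-C_\epsilon\|\psi\|^2_{L^2(U_R)}$, and interpolation with the local bound) is essentially the paper's.
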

Inequality~\eqref{Hardy} holds in straight wedges 
(\ie~$\theta'$ vanishes identically) with $c=0$
and it is optimal in this case
(in the sense that it cannot be improved by ad Ing 
a positive term on the right hand side).
The message of Theorem~\ref{Thm.Hardy} is that 
a better inequality holds whenever the wedge is non-trivially curved.
Although there is an extensive literature on Hardy inequalities
in conical domains
(see~\cite{Devyver-Pinchover-Psaradakis} and references therein), 
Theorem~\ref{Thm.Hardy} seems to be new.

\section{The polynomial decay rate of the heat flow}\label{Sec.proof}
%
This section is devoted to proofs of Theorem~\ref{Thm.main}
and its Corollary~\ref{Thm.point}.

\subsection{Curvilinear coordinates}\label{Sec.curve}
%
As usual, we understand~$-\Delta_D^\Omega$ as the self-adjoint
operator in $\sii(\Omega)$ associated with the quadratic form
$Q_D^\Omega[u]:=\|\nabla u\|_{\sii(\Omega)}^2$, 
$\Dom(Q_D^\Omega) := H_0^1(\Omega)$.
In this subsection we express the Dirichlet Laplacian
and the associated heat equation~\eqref{Cauchy}
in natural curvilinear coordinates.

By definition~\eqref{wedge}, 
$\Omega$~coincides with the image of the mapping
$\mathcal{L} : U \to \Real^2$, where
\begin{equation}\label{layer}
  \mathcal{L}(r,\varphi) := 
  \big(
  r \cos[\varphi+\theta(r)], r \sin[\varphi+\theta(r)]
  \big)
\end{equation}
and $U :=  (0,\infty) \times (0,2\pi a)$.
Let $\theta \in C^1((0,\infty))$.
Then it is easy to see that $\mathcal{L}$ induces a $C^1$-smooth diffeomorphism
between~$U$ and~$\Omega$.
The corresponding metric 
$G := \nabla\mathcal{L} \cdot (\nabla\mathcal{L})^T$
acquires the form
\begin{equation}\label{metric}
  G(r,\varphi) = 
  \begin{pmatrix}
    1 + r^2 \theta'(r)^2 & r^2 \theta'(r) 
    \\
    r^2 \theta'(r) & r^2 
  \end{pmatrix} 
  , \qquad
  \det\big(G(r,\varphi)\big) = r^2
  \,.
\end{equation}

Introducing also $\mathcal{L}_0 : U \to \Real^2$ by
$\mathcal{L}_0(r,\varphi) := (r\cos\varphi,r\sin\varphi) \in \Omega_0$,
we may understand~\eqref{layer} as an identification 
of~$\Omega$ with the straight wedge~$\Omega_0$
introduced in~\eqref{wedge0} via the commutative diagram 
\begin{equation}\label{diagram}
\xymatrix{
   &U  \ar[dl]_{\mathcal{L}_0} \ar[dr]^{\mathcal{L}} &
   \\	
   \Omega_0 \ar[rr]_{\mathcal{L}\circ\mathcal{L}_0^{-1}} && \Omega \,.
}
\end{equation}

Using the unitary transform 
\begin{equation}\label{unitary}
  \mathcal{U}: 
  \sii(\Omega) \to \sii(U,r \, \der r \, \der\varphi )
  =: \Hilbert
\end{equation}
defined by $u \mapsto u \circ \mathcal{L}$,
we consider the unitarily equivalent operator
$H := \mathcal{U}(-\Delta_D^\Omega)\mathcal{U}^{-1}$.
The latter is just the operator in the new Hilbert space~$\Hilbert$
associated with the transformed form
$h[\psi] := Q_D^\Omega[\mathcal{U}^{-1}\psi]$, 
$\Dom(h) := \mathcal{U}\Dom(Q_D^\Omega)$.
For later purposes, we also introduce the unitary transform
$
  \mathcal{U}_0: 
  \sii(\Omega_0) \to \sii(U,r \, \der r \, \der\varphi )
$
by $u \mapsto u \circ \mathcal{L}_0$.

\begin{Proposition}\label{Prop.domain}
Let $\theta \in C^1((0,\infty))$ be such that~\eqref{Ass} holds.
Then 
\begin{align}
  h[\psi] &= \int_U 
  \left[
  \left|\big(\partial_r-\theta'(r)\partial_\varphi\big)\psi\right|^2
  + \left|\frac{\partial_\varphi \psi}{r}\right|^2
  \right]
  r \, \der r \, \der\varphi 
  \,,
  \label{h-form}
  \\
  \Dom(h) &= \Hilbert_0^1 := \overline{C_0^1(U)}^{\|\cdot\|_{\Hilbert^1}}
  \,,
  \label{h-form.domain}
\end{align}
where 
$$
  \|\psi\|_{\Hilbert^1} := \sqrt{\int_U 
  \left[
  \left|\partial_r\psi\right|^2
  + \left|\frac{\partial_\varphi \psi}{r}\right|^2
  + |\psi|^2
  \right]
  r \, \der r \, \der\varphi}
  \,.
$$
\end{Proposition}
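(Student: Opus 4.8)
The plan is to compute the quadratic form $Q_D^\Omega$ in the curvilinear coordinates induced by $\mathcal{L}$ and then to identify the form domain as the closure of $C_0^1(U)$ in the natural first-order norm. For the form expression, I would start from $Q_D^\Omega[v] = \int_\Omega |\nabla v|^2\,\mathrm{d}x$ and apply the change of variables $x = \mathcal{L}(r,\varphi)$, $v = u \circ \mathcal{L}$ with $u = \mathcal{U}^{-1}\psi$, so that $Q_D^\Omega[\mathcal{U}^{-1}\psi]$ becomes an integral over $U$ of $(\nabla_{(r,\varphi)}\psi) \cdot G^{-1} (\nabla_{(r,\varphi)}\psi)^T$ against the measure $\sqrt{\det G}\,\mathrm{d}r\,\mathrm{d}\varphi = r\,\mathrm{d}r\,\mathrm{d}\varphi$. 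Inverting the explicit matrix $G$ from~\eqref{metric} gives
\begin{equation*}
  G^{-1}(r,\varphi) = \frac{1}{r^2}
  \begin{pmatrix}
    r^2 & -r^2\theta'(r) \\
    -r^2\theta'(r) & 1 + r^2\theta'(r)^2
  \end{pmatrix}
  =
  \begin{pmatrix}
    1 & -\theta'(r) \\
    -\theta'(r) & r^{-2} + \theta'(r)^2
  \end{pmatrix},
\end{equation*}
and a direct completion of the square shows that the integrand equals $|\partial_r\psi - \theta'(r)\,\partial_\varphi\psi|^2 + r^{-2}|\partial_\varphi\psi|^2$, which is exactly~\eqref{h-form}. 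This part is a routine but clean computation; the hypothesis~\eqref{Ass} is not needed for it beyond the $C^1$-regularity of $\theta$ that makes $\mathcal{L}$ a diffeomorphism.

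The substantive point is the identification of the domain~\eqref{h-form.domain}. By construction $\Dom(h) = \mathcal{U}\,H_0^1(\Omega)$, so I need to show that $\mathcal{U}\,H_0^1(\Omega) = \overline{C_0^1(U)}^{\|\cdot\|_{\mathcal{H}^1}}$. The natural strategy is to prove that the map $\mathcal{U}$ (equivalently, composition with $\mathcal{L}$) is a bounded bijection from $H^1(\Omega)$ onto $\mathcal{H}^1 := \{\psi : \|\psi\|_{\mathcal{H}^1} < \infty\}$ with bounded inverse, and that it carries $C_0^1(\Omega)$-type dense sets to $C_0^1(U)$ and preserves closures. For the norm equivalence, one compares $\|\psi\|_{\mathcal{H}^1}^2$ with $\int_U (|\partial_r\psi - \theta'\partial_\varphi\psi|^2 + r^{-2}|\partial_\varphi\psi|^2 + |\psi|^2)\,r\,\mathrm{d}r\,\mathrm{d}\varphi$; these differ only in the radial derivative term, and the cross term $-2\,\mathrm{Re}\langle \partial_r\psi, \theta'\partial_\varphi\psi\rangle$ together with $|\theta'\partial_\varphi\psi|^2$ can be absorbed using the elementary bound $|\theta'(r)\,\partial_\varphi\psi| = |r\theta'(r)|\cdot|r^{-1}\partial_\varphi\psi|$ and the assumption~\eqref{Ass} that $r\mapsto r\theta'(r)$ is bounded — this is precisely where~\eqref{Ass} enters, guaranteeing that $\theta'\partial_\varphi\psi \in L^2(U, r\,\mathrm{d}r\,\mathrm{d}\varphi)$ and controlling it by the angular part of $\|\psi\|_{\mathcal{H}^1}$. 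Hence $h[\psi] + \|\psi\|_{\mathcal{H}}^2$ and $\|\psi\|_{\mathcal{H}^1}^2$ are equivalent norms on the relevant space.

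With the norm equivalence in hand, the domain identity reduces to the purely geometric fact that $\mathcal{U}$ maps $H_0^1(\Omega)$ onto (the completion of) $C_0^1(U)$ in the $\mathcal{H}^1$-norm. Since $\mathcal{L}: U \to \Omega$ is a $C^1$-diffeomorphism, composition with $\mathcal{L}$ maps $C_0^1(\Omega)$ onto $C_0^1(U)$ bijectively; because $H_0^1(\Omega)$ is by definition $\overline{C_0^1(\Omega)}^{\|\cdot\|_{H^1(\Omega)}}$ and the $H^1(\Omega)$-norm pulls back (via the bounded-with-bounded-inverse change of variables just established) to a norm equivalent to $\|\cdot\|_{\mathcal{H}^1}$, the closure is preserved: $\mathcal{U}\,H_0^1(\Omega) = \overline{C_0^1(U)}^{\|\cdot\|_{\mathcal{H}^1}} = \mathcal{H}_0^1$. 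I expect the main obstacle to be the careful bookkeeping in the norm-equivalence step near the conical tip $r=0$, where $\theta'$ may be singular: one must check that the weight $|r\theta'(r)|$ being merely in $L^\infty$ (rather than vanishing at $0$) is enough, and that no boundary contribution at $r=0$ appears when integrating by parts or passing to limits along the approximating sequences in $C_0^1(U)$ — functions in $C_0^1(U)$ vanish near $r=0$, which is what makes this harmless, but it should be stated explicitly.
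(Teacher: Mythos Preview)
Your proposal is correct and follows essentially the same route as the paper: compute the form via the inverse metric~$G^{-1}$ (the paper does this in one line, you spell out the completion of the square), then establish the equivalence of the $h$-induced norm and $\|\cdot\|_{\mathcal{H}^1}$ using $|\theta'(r)\,\partial_\varphi\psi| \le \|r\theta'\|_\infty \, |r^{-1}\partial_\varphi\psi|$ from~\eqref{Ass}, which is exactly the mechanism behind the paper's estimates~\eqref{elementary1}--\eqref{elementary2}. Your explicit discussion of why the closure is preserved under~$\mathcal{U}$ is a welcome elaboration of what the paper leaves implicit; the worry about the conical tip is legitimate but, as you note, harmless since the dense class $C_0^1(U)$ vanishes near $r=0$.
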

\begin{proof}
If $u \in C_0^1(\Omega)$, 
then $\psi := u \circ \mathcal{L} \in C_0^1(U)$
and~\eqref{metric} yields that~$h$ acts as in~\eqref{h-form}.
It remains to show that the norm induced by~$h$ 
is equivalent to $\|\cdot\|_{\Hilbert^1}$.
Note that the latter is just the norm of $H^1(\Omega_0)$ 
written in polar coordinates.
Assumption~\eqref{Ass} means that there exists a constant~$C$
such that $|\theta'(r)| \leq C/r$ for all $r \in (0,\infty)$.
Elementary estimates yield
\begin{equation}\label{elementary1}
\begin{aligned}
  h[\psi] 
  & \geq \int_U 
  \left[
  \epsilon \, |\partial_r\psi|^2
  - \frac{\epsilon}{1-\epsilon} \,
  \left|\theta'(r)\partial_\varphi\psi\right|^2
  + \left|\frac{\partial_\varphi \psi}{r}\right|^2
  \right]
  r \, \der r \, \der\varphi 
  \\
  & \geq \int_U 
  \left[
  \epsilon \, |\partial_r\psi|^2
  + \left(1-\frac{C^2 \, \epsilon}{1-\epsilon}\right)
  \left|\frac{\partial_\varphi \psi}{r}\right|^2
  \right]
  r \, \der r \, \der\varphi 
\end{aligned}
\end{equation}
and
\begin{equation}\label{elementary2}
  h[\psi] 
  \leq \int_U 
  \left[
  2 \, |\partial_r\psi|^2
  + \left(1+2 \, C^2 \right)
  \left|\frac{\partial_\varphi \psi}{r}\right|^2
  \right]
  r \, \der r \, \der\varphi
\end{equation}
for all $\psi \in C_0^1(U)$ and every $\epsilon \in (0,1)$.
Hence, the equivalence of the norms follows 
by choosing~$\epsilon$ sufficiently small.
\end{proof}

In a distributional sense we may write
\begin{equation}\label{op.twist}
  H = -\frac{1}{r} 
  \big(\partial_r-\theta'(r)\partial_\varphi\big) r 
  \big(\partial_r-\theta'(r)\partial_\varphi\big) 
  - \frac{1}{r^2} \, \partial_\varphi^2 
  \,.
\end{equation}
We notice that~$H$ has a structure similar to the Dirichlet Laplacian
in a twisted tube when expressed in suitable curvilinear coordinates,
\cf~\cite{EKK}.

By the unitary equivalence above, 
it is enough to establish the result~\eqref{result}
for the heat semigroup $\Euler^{-t H}$ in~$\Hilbert$. 
Given $\psi_0 \in \Hilbert$, $\psi(t) := \Euler^{-t H} \psi_0$
is a solution of the Cauchy problem
\begin{equation}\label{Cauchy.straight}
\left\{
\begin{aligned}
  \partial_t \psi + H \psi &= 0 \,,
  \\
  \psi(0) &= \psi_0 \,.
\end{aligned}
\right.
\end{equation}
By the Hille-Yosida theorem~\cite[Thm.~7.7]{Brezis_new},
\begin{equation}\label{sol.smooth}
  \psi\in C^0\big([0, \infty); \Hilbert\big)
  \cap C^1\big((0, \infty); \Hilbert\big)
  \cap C^0\big((0,\infty); \Dom(H)\big)
  \,.
\end{equation}

By the Beurling-Deny criterion, 
$\Euler^{-t H}$ is positivity-preserving for all $t \geq 0$.
Moreover, the real and imaginary parts
of the solution~$\psi$ of~\eqref{Cauchy.straight} evolve separately.
By writing $\psi = \Re\psi + i \, \Im\psi$ and solving~\eqref{Cauchy.straight}
with initial data $\Re\psi_0$ and $\Im\psi_0$,
we may therefore reduce the problem to the case of a real function~$\psi_0$,
without restriction.
Consequently, all the functional spaces are considered to be real 
in this section.

\subsection{Self-similar variables}
%
Now we adapt the method of self-similar variables. 
It was originally developed 
for the heat equation in the whole Euclidean space in~\cite{Escobedo-Kavian_1987}. 
We refer to~\cite{KZ1} for an application to twisted tubes 
which exhibit technical similarities with the present geometric setting.

If $(r,\varphi,t) \in U \times (0,\infty)$ are the initial	
space-time variables for the heat equation~\eqref{Cauchy.straight},
we introduce \emph{self-similar variables}
$(\rho,\varphi,s) \in U \times (0,\infty)$ by
\begin{equation}\label{relationship}
  \rho := (t+1)^{-1/2} \, r
  \,, \qquad
  s := \log(t+1)
  \,.
\end{equation}
The angular variable~$\varphi$ is not changed by this transformation.
We naturally write
$  
  y := (\rho\cos(\varphi),\rho\sin\varphi) \in \Omega_0
$, 
so that $|y|^2=\rho^2$.

If~$\psi$ is a solution of~\eqref{Cauchy.straight},
we then define a new function
\begin{equation}\label{SST}
  \tilde{\psi}(\rho,\varphi, s) :=
  \Euler^{s/2} \, \psi\big(\Euler^{s/2}\rho,\varphi,\Euler^s-1\big)
  \,.
\end{equation}
The inverse transform is given by
\begin{equation}\label{ISST}
  \psi(r,\varphi,t)=(t+1)^{-1/2} \, 
  \tilde{\psi}\big((t+1)^{-1/2} r,\varphi,\log(t+1)\big)
  \,.
\end{equation}
It is straightforward to check that~$\tilde{\psi}$
satisfies a weak formulation of the non-autonomous parabolic problem
\begin{equation}\label{Cauchy.ss}
\left\{
\begin{aligned}
  \partial_s\tilde{\psi} -\frac{1}{2} \, \rho \, \partial_\rho \tilde{\psi}
  - \frac{1}{2} \, \tilde{\psi}
  + H_s \tilde{\psi} &= 0 \,,
  \\
  \tilde{\psi}(0) &= \psi_0 \,,
\end{aligned}
\right.
\end{equation}
where
\begin{equation}\label{op.self}
  H_s := -\frac{1}{\rho} 
  \big(\partial_\rho-\theta_s'(\rho)\partial_\varphi\big) \rho 
  \big(\partial_\rho-\theta_s'(\rho)\partial_\varphi\big) 
  - \frac{1}{\rho^2} \, \partial_\varphi^2 
\end{equation}
with the rescaled function
\begin{equation}\label{scaled}
  \theta_s'(\rho) := e^{s/2} \, \theta'(e^{s/2}\rho)
  \,.
\end{equation}

The self-similarity transform $\psi \mapsto \tilde{\psi}$
acts as a unitary transform in 
$\Hilbert \equiv \sii(U,\rho \, \der \rho \, \der\varphi)$;
indeed, we have
\begin{equation}\label{preserve}
  \|\psi(t)\|_{\Hilbert} = \|\tilde{\psi}(s)\|_{\Hilbert}
\end{equation}
for all $s,t \in (0,\infty)$.
This means that we can analyse the asymptotic time behaviour
of the former by studying the latter.
However, the natural space to study the evolution~\eqref{Cauchy.ss}
is not~$\Hilbert$ but rather the transformed analogue of~\eqref{weighted}
\begin{equation}\label{analogue.weighted}
  \Hilbert_w := 
  \sii(U,w(y) \,\rho \, \der \rho \, \der\varphi)
  \,.
\end{equation}
To avoid working in weighted Sobolev spaces,
we proceed equivalently by introducing an additional transform
\begin{equation}\label{noweight}
  \phi(\rho,\varphi,s) := w(y)^{1/2} \, \tilde{\psi}(\rho,\varphi,s) 
  \,.
\end{equation}
Then the Cauchy problem~\eqref{Cauchy.ss} is transformed to
\begin{equation}\label{Cauchy.noweight}
\left\{
\begin{aligned}
  \partial_s\phi 
  + L_s \phi + M_s \phi &= 0 \,,
  \\
  \phi(0) &= \phi_0 := w^{1/2}\psi_0 \,,
\end{aligned}
\right.
\end{equation}
where
\begin{equation}
\begin{aligned}
  L_s &:= -\frac{1}{\rho} 
  \big(\partial_\rho-\theta_s'(\rho)\partial_\varphi\big) \rho 
  \big(\partial_\rho-\theta_s'(\rho)\partial_\varphi\big) 
  - \frac{1}{\rho^2} \, \partial_\varphi^2 
  + \frac{\rho^2}{16}
  \,,
  \\
  M_s &:= -\frac{1}{2} \, \rho \, \theta_s'(\rho) \, \partial_\varphi
  \,. 
\end{aligned}
\end{equation}

More precisely, $L_s$~is defined as the (self-adjoint) operator in~$\Hilbert$ 
associated with the quadratic form
$$
\begin{aligned}
  l_s[\phi] &:= \int_U 
  \left[
  \left|\big(\partial_\rho-\theta_s'(\rho)\partial_\varphi\big)\phi\right|^2
  + \left|\frac{\partial_\varphi \phi}{\rho}\right|^2
  + \frac{\rho^2}{16} \, |\phi|^2
  \right]
  \rho \, \der \rho \, \der\varphi 
  \\
  \Dom(l_s) &:= \overline{C_0^1(U)}^{\|\cdot\|_{l_s}}
  \,,
\end{aligned}
$$ 
where 
$$
  \|\phi\|_{l_s} := \sqrt{l_s[\phi]+\|\phi\|_{\Hilbert}^2}
  \,.
$$
The operator~$M_s$ can be handled as 
a small (non-self-adjoint) perturbation of~$L_s$. 
Indeed, by the Schwarz inequality, we have
$$
  |(\phi,M_s\phi)_\Hilbert| 
  \leq \frac{1}{2} \, C \Euler^{-s/2} 
  \|\rho\phi\|_\Hilbert
  \left\|\frac{\partial_\varphi\phi}{\rho}\right\|_\Hilbert
$$
for all $\phi \in C_0^1(U)$,
where $C := \sup_{r \in (0,\infty)} |r\,\theta'(r)|$
is finite due to~\eqref{Ass}.
Consequently, $M_s$~with the form domain~$\Dom(l_s)$
is relatively form-bounded with respect to~$L_s$
with the relative bound $C \Euler^{-s/2}$. 
Moreover, by an integration by parts, we have
\begin{equation}\label{nsa}
  \Re(\phi,M_s\phi)_\Hilbert 
  = 0
\end{equation}
for all $\phi \in C_0^1(U)$,
and this identity extends to the real part of the quadratic form 
associated with~$M_s$ for all $\phi \in \Dom(l_s)$.

We remark that the form domain $\Dom(l_s)$ as a set is independent of~$s$.
To see it, we compare~$l$ with the following $s$-independent form  
\begin{equation}\label{l-form}
\begin{aligned}
  l[\phi] &:= \int_U 
  \left[
  \left|\partial_\rho\phi\right|^2
  + \left|\frac{\partial_\varphi \phi}{\rho}\right|^2
  + \frac{\rho^2}{16} \, |\phi|^2
  \right]
  \rho \, \der \rho \, \der\varphi 
  \,,
  \\
  \Dom(l) &:= \left\{
  \psi \in \Hilbert_0^1 \ : \ |y|\phi \in \Hilbert
  \right\}
  \,,
\end{aligned}
\end{equation}
where~$\Hilbert_0^1$ is introduced in~\eqref{h-form.domain}.

\begin{Proposition}\label{Prop.equivalent}
Let $\theta \in C^1((0,\infty))$ be such that~\eqref{Ass} holds.
Then there is a positive constant~$C$ such that
\begin{equation}\label{equivalent}
  C^{-1} \, l[\phi] \leq l_s[\phi] \leq C \, l[\phi]
\end{equation}
for every $\phi \in C_0^1(U)$.
\end{Proposition}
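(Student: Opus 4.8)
The plan is to estimate the two forms $l_s$ and $l$ against each other by controlling the cross term $\theta_s'(\rho)\partial_\varphi\phi$ that distinguishes $l_s[\phi]$ from $l[\phi]$. First I would observe that the term $\rho^2|\phi|^2/16$ is common to both forms, so it suffices to compare the "gradient parts''
$$
  g_s[\phi] := \int_U \left[
  \left|\big(\partial_\rho-\theta_s'(\rho)\partial_\varphi\big)\phi\right|^2
  + \left|\frac{\partial_\varphi\phi}{\rho}\right|^2
  \right] \rho\,\der\rho\,\der\varphi
  \qquad\text{and}\qquad
  g[\phi] := \int_U \left[
  |\partial_\rho\phi|^2 + \left|\frac{\partial_\varphi\phi}{\rho}\right|^2
  \right] \rho\,\der\rho\,\der\varphi .
$$
The key scale-invariance is that $|\rho\,\theta_s'(\rho)| = |\Euler^{s/2}\rho\,\theta'(\Euler^{s/2}\rho)| \leq C$ for all $\rho>0$ and all $s\geq 0$, where $C := \sup_{r\in(0,\infty)}|r\,\theta'(r)|$ is finite by~\eqref{Ass}: indeed, writing $\tau = \Euler^{s/2}\rho$, the quantity $\Euler^{s/2}\rho\,\theta'(\Euler^{s/2}\rho) = \tau\,\theta'(\tau)$ is nothing but the bounded function of~\eqref{Ass} evaluated at~$\tau$. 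Thus the same constant~$C$ that appeared in the proof of Proposition~\ref{Prop.domain} governs $\theta_s'$ uniformly in~$s$, which is exactly what makes the bounds $s$-independent.

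Next I would run the elementary Cauchy--Schwarz estimates of~\eqref{elementary1}--\eqref{elementary2} verbatim, but now with $\theta_s'$ in place of $\theta'$ and working with the gradient parts. Expanding the square,
$$
  \left|\big(\partial_\rho-\theta_s'(\rho)\partial_\varphi\big)\phi\right|^2
  = |\partial_\rho\phi|^2 - 2\,\theta_s'(\rho)\,\Re\!\big(\overline{\partial_\rho\phi}\,\partial_\varphi\phi\big)
  + \theta_s'(\rho)^2 |\partial_\varphi\phi|^2 ,
$$
and for any $\epsilon\in(0,1)$ Young's inequality gives
$$
  2\,|\theta_s'(\rho)|\,|\partial_\rho\phi|\,|\partial_\varphi\phi|
  \leq \epsilon\,|\partial_\rho\phi|^2 + \frac{1}{\epsilon}\,\theta_s'(\rho)^2|\partial_\varphi\phi|^2 .
$$
Since $\theta_s'(\rho)^2|\partial_\varphi\phi|^2 = \big(\rho\,\theta_s'(\rho)\big)^2 |\partial_\varphi\phi/\rho|^2 \leq C^2\,|\partial_\varphi\phi/\rho|^2$, this yields
$$
  g_s[\phi] \geq \int_U \left[ (1-\epsilon)|\partial_\rho\phi|^2 + \Big(1 - \tfrac{C^2}{\epsilon} - C^2\Big)\Big|\tfrac{\partial_\varphi\phi}{\rho}\Big|^2 \right]\rho\,\der\rho\,\der\varphi
$$
wait --- this naive splitting wastes the coefficient of $|\partial_\varphi\phi/\rho|^2$; instead, following~\eqref{elementary1} I would keep $\epsilon\,|\partial_\rho\phi|^2$ as the lower bound for the first term after absorbing the rest, so that $g_s[\phi]\geq \int_U [\epsilon|\partial_\rho\phi|^2 + (1 - \tfrac{C^2\epsilon}{1-\epsilon})|\partial_\varphi\phi/\rho|^2]\,\rho\,\der\rho\,\der\varphi$ for $\epsilon$ small enough that $1 - C^2\epsilon/(1-\epsilon) > 0$; this gives the left inequality in~\eqref{equivalent} with a constant $C$ depending only on the $\epsilon$ so chosen and on the $L^\infty$-bound of $r\theta'(r)$. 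For the right inequality, the crude bound $|(\partial_\rho - \theta_s'\partial_\varphi)\phi|^2 \leq 2|\partial_\rho\phi|^2 + 2\theta_s'(\rho)^2|\partial_\varphi\phi|^2 \leq 2|\partial_\rho\phi|^2 + 2C^2|\partial_\varphi\phi/\rho|^2$ gives $g_s[\phi] \leq 2\,g[\phi] + (1 + 2C^2 - 1)\int_U|\partial_\varphi\phi/\rho|^2\rho\,\der\rho\,\der\varphi \leq (2 + 2C^2)\,g[\phi]$, mirroring~\eqref{elementary2}. Adding back the identical $\rho^2|\phi|^2/16$ terms to $g_s$ and $g$ and enlarging $C$ to cover both inequalities completes the argument on $C_0^1(U)$.

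The main thing to be careful about --- rather than a genuine obstacle --- is making sure the constant is truly uniform in $s$; this is precisely why the reformulation $|\rho\,\theta_s'(\rho)| \leq \|r\theta'\|_{L^\infty}$ is crucial, since a pointwise bound on $\theta_s'$ alone would degrade as $s\to\infty$ (because $\theta_s'(\rho) = \Euler^{s/2}\theta'(\Euler^{s/2}\rho)$ can blow up), whereas the scale-invariant combination $\rho\,\theta_s'(\rho)$ does not. Everything else is the same Young-inequality bookkeeping as in Proposition~\ref{Prop.domain}. Finally, since~\eqref{equivalent} is an inequality between quadratic forms that are continuous in the $\|\cdot\|_{l}$-norm (and $\|\cdot\|_{l_s}$-norm, which by~\eqref{equivalent} on $C_0^1$ is equivalent to it), it extends by density from $C_0^1(U)$ to the common form domain, confirming in particular that $\Dom(l_s) = \Dom(l)$ as sets, independently of~$s$.
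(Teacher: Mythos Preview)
Your argument is correct and is precisely the approach the paper takes: the paper's own proof simply refers back to the estimates~\eqref{elementary1}--\eqref{elementary2} from Proposition~\ref{Prop.domain} and stresses that the constant is $s$-independent because $|\rho\,\theta_s'(\rho)|=|r\,\theta'(r)|\leq C$ under~\eqref{Ass}, exactly as you spell out. Aside from the informal ``wait'' aside and some minor arithmetic looseness in the upper-bound step (which still yields a valid estimate), there is nothing to add.
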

\begin{proof}
The inequalities can be obtained in the same way as
in the proof of Proposition~\ref{Prop.domain}.
It is only important to point out here that the constant~$C$
can be chosen independent of~$s$ 
as a consequence of the scaling~\eqref{relationship} 
and the assumption~\eqref{Ass}.
\end{proof}

Consequently, the norms $\|\cdot\|_{l_s}$ and $\|\cdot\|_{l}$ are equivalent.
In particular, $\Dom(l_s)=\Dom(l)$ for all $s \geq 0$.
We also remark that $\Dom(l)$ is compactly embedded in~$\Hilbert$,
which implies that~$L_s$ is an operator with compact resolvent
for all $s \geq 0$.

The fact that~\eqref{Cauchy.noweight} is well posed 
in the scale of Hilbert spaces
$$
  \Dom(l) \subset \Hilbert \subset \Dom(l)^*
$$
follows by an abstract theorem of J.~L.~Lions~\cite[Thm.~10.9]{Brezis_new}
about weak solutions of parabolic problems with time-dependent coefficients.
We refer to~\cite{KZ1} for more details in an analogous situation.
Here it is important that~\eqref{nsa} holds true,
so that the form 
associated with the form sum $L_s\dot{+}M_s$ is bounded and coercive on~$\Dom(l)$.

\subsection{Reduction to a spectral problem}
%
Using~\eqref{nsa},
it follows from~\eqref{Cauchy.noweight} that the identity
\begin{equation}\label{energy}
  \frac{1}{2} \frac{\der}{\der s} \|\phi(s)\|_{\Hilbert}^2 
  = - l_s[\phi(s)] 
\end{equation}
holds for every $s \geq 0$.
Now, as usual for energy estimates, 
we replace the right hand side of~\eqref{energy}
by the spectral bound
\begin{equation}\label{spec.bound}
  l_s[\phi(s)] \geq \lambda_{\theta,a}(s) \, \|\phi(s)\|_{\Hilbert}^2 
  \,,
\end{equation}
where~$\lambda_{\theta,a}(s)$ is the lowest eigenvalue of~$L_s$.
Then~\eqref{energy} together with~\eqref{spec.bound} implies Gronwall's inequality
\begin{equation}\label{Gronwall}
  \|\phi(s)\|_{\Hilbert}
  \leq \|\phi_0\|_{\Hilbert} \
  \Euler^{-\int_0^s \lambda_{\theta,a}(\tau) \, \der\tau}
\end{equation}
valid for every $s \geq 0$.
From~\eqref{Gronwall} with help of~\eqref{noweight}, \eqref{preserve} 
and the relationship~\eqref{relationship},
we obtain the crucial estimate
\begin{equation}\label{crucial}
  \Gamma_{\theta,a} \geq \lambda_{\theta,a}(\infty)
  := \liminf_{s\to\infty}\lambda_{\theta,a}(s)
  \,.
\end{equation}
We refer to~\cite[Sec.~4.5]{KZ2}, \cite[Sec.~7.10]{KKolb} 
or \cite[Prop.~4.3]{CK}
for more details in similar problems.

\subsection{The asymptotic behaviour}
%
It remains to study the asymptotic behaviour of~$\lambda_{\theta,a}(s)$
as $s \to \infty$. 
If $\theta' \in L^1((0,\infty))$, 
then~$\theta_s'$ converges in the sense of distributions on $(0,\infty)$
to zero as $s \to \infty$.
Hence, it is expectable that~$L_s$ converges, in a suitable sense,
to the operator 
\begin{equation}\label{op.noweight.straight}
  L = -\frac{1}{\rho} \, \partial_\rho \, \rho \, \partial_\rho 
  - \frac{1}{\rho^2} \, \partial_\varphi^2 
  + \frac{\rho^2}{16}
\end{equation}
as $s \to \infty$. The latter should be understood as the operator
associated with the quadratic form~\eqref{l-form}.
The following result confirms this expectation.
\begin{Proposition}\label{Prop.strong}
Let $\theta \in C^1((0,\infty))$ be such that~\eqref{Ass} holds
and $\supp\theta'$ is compact in~$\Real$. 
Then the operator~$L_s$ converges to~$L$ 
in the norm-resolvent sense as $s \to \infty$, \ie,
\begin{equation}\label{nrs}
  \lim_{s \to \infty} 
  \big\|L_s^{-1}-L^{-1}\big\|_{\mathcal{H}\to\mathcal{H}} 
  = 0
  \,.
\end{equation}
\end{Proposition}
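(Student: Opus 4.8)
The plan is to establish the norm-resolvent convergence \eqref{nrs} by a direct estimate on the difference $L_s^{-1}-L^{-1}$, using the second resolvent identity together with the fact that the perturbation term carrying $\theta_s'$ is localised in a region that shrinks to the conical singularity $\rho=0$ as $s\to\infty$. Write $L_s = L + R_s$ in the form sense, where $R_s$ collects the terms in $l_s-l$ involving $\theta_s'$; concretely, after expanding the square in $l_s$,
\begin{equation}\label{Rs-form}
  r_s[\phi]
  := l_s[\phi]-l[\phi]
  = \int_U \left[
  \theta_s'(\rho)^2 \, |\partial_\varphi\phi|^2
  - 2\,\theta_s'(\rho)\,\Re\!\big(\overline{\partial_\rho\phi}\,\partial_\varphi\phi\big)
  \right] \rho \, \der\rho \, \der\varphi
  \,.
\end{equation}
The key observation is that $\theta_s'(\rho)=\Euler^{s/2}\theta'(\Euler^{s/2}\rho)$ is supported, as a function of~$\rho$, in $\{\rho : \Euler^{s/2}\rho \in \supp\theta'\}$, i.e.\ in an interval $(0,K\Euler^{-s/2})$ for some fixed $K>0$ (using compactness of $\supp\theta'$), and on that set $|\theta_s'(\rho)|\le C/\rho$ with the same constant $C$ as in~\eqref{Ass}. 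Thus $r_s$ is a form perturbation living on a set whose $\rho$-extent is $O(\Euler^{-s/2})$.

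First I would make the preceding heuristic quantitative. For $\phi\in\Dom(l)$, bounding~\eqref{Rs-form} by Cauchy--Schwarz gives
\begin{equation}\label{Rs-bound}
  |r_s[\phi]|
  \le C\,\Big\|\tfrac{\partial_\varphi\phi}{\rho}\Big\|_{\Hilbert,\,\Omega_s}^2
  + 2C\,\|\partial_\rho\phi\|_{\Hilbert,\,\Omega_s}\,\Big\|\tfrac{\partial_\varphi\phi}{\rho}\Big\|_{\Hilbert,\,\Omega_s}
  \,,
\end{equation}
where $\Omega_s := \{(\rho,\varphi)\in U : \rho < K\Euler^{-s/2}\}$ and the subscript indicates the norm restricted to $\Omega_s$. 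The right-hand side is controlled by $l[\phi]$, but to get a \emph{vanishing} bound I need a Hardy-type inequality at the singularity showing that the $l$-energy carried by the region near $\rho=0$ is small: specifically, I expect an estimate of the form $\|\partial_\rho\phi\|_{\Hilbert,\Omega_s}^2 + \|\rho^{-1}\partial_\varphi\phi\|_{\Hilbert,\Omega_s}^2 \le \omega(s)\,l[\phi]$ for $\phi$ in (a core of) $\Dom(l)$, with $\omega(s)\to 0$. This follows because functions in $\Hilbert_0^1$ vanish on the rays bounding the wedge and, by the one-dimensional Hardy inequality in the angular variable on $(0,2\pi a)$, one has $\int |\phi|^2 \le (2\pi a)^2 \int |\partial_\varphi\phi|^2$ pointwise in $\rho$; combined with absolute continuity of the integral over the shrinking set $\Omega_s$, the tail energy goes to zero uniformly on $l$-bounded sets. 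Hence $r_s$ is $L$-form-bounded with relative bound $\delta_s := C'\omega(s) \to 0$.

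Next I would convert this into norm-resolvent convergence. Since $\delta_s<1/2$ for $s$ large, the form $l_s = l + r_s$ has $\Dom(l_s)=\Dom(l)$ (already known from Proposition~\ref{Prop.equivalent}) and $L_s$, $L$ are uniformly bounded below with uniformly bounded inverses; write
\begin{equation}\label{2ndres}
  L_s^{-1}-L^{-1}
  = -\,L_s^{-1}\,(L_s-L)\,L^{-1}
\end{equation}
interpreted through the sesquilinear forms: for $f,g\in\Hilbert$, $\big((L_s^{-1}-L^{-1})f,g\big)_\Hilbert = -\,r_s\big(L_s^{-1}f,\,L^{-1}g\big)$. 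Applying~\eqref{Rs-bound}--type bounds to the arguments $\phi = L_s^{-1}f$ and $\tilde\phi = L^{-1}g$, using that $L^{-1}$ maps $\Hilbert$ into $\Dom(l)$ with $l[L^{-1}g]\le \|g\|_\Hilbert^2$ (and likewise for $L_s^{-1}$, uniformly in~$s$), and crucially using the shrinking-support smallness $\omega(s)$ on the factor coming from $\tilde\phi = L^{-1}g$, I obtain
\begin{equation}\label{final-est}
  \big|\big((L_s^{-1}-L^{-1})f,g\big)_\Hilbert\big|
  \le C''\,\sqrt{\omega(s)}\;\|f\|_\Hilbert\,\|g\|_\Hilbert
  \,,
\end{equation}
which gives \eqref{nrs} with rate $O(\sqrt{\omega(s)})$. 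The main obstacle, and the step deserving the most care, is the Hardy-type tail estimate making $\omega(s)\to 0$: one has to handle the cross term in~\eqref{Rs-form} where only \emph{one} derivative factor sees the small set $\Omega_s$, so the naive bound $|\theta_s'|\le C/\rho$ alone is not enough — it must be paired with the angular Hardy inequality and the absolute continuity of the integral to squeeze out the decay, and one should check that $C_0^1(U)$ (or $C_0^\infty(U)$) is a form core for both $l$ and $l_s$ so that all inequalities pass to the full form domain. A minor additional point is that $L_s$ is \emph{not} obtained from $L$ by a relatively bounded perturbation of the operators but only of the forms, so the resolvent identity must be read in the form sense throughout, as indicated above.
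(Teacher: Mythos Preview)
Your strategy—writing $l_s=l+r_s$ and controlling $L_s^{-1}-L^{-1}$ through the form-level second resolvent identity—is reasonable, but there is a genuine gap at the key step. You claim that the tail energy $E_s[\phi]:=\|\partial_\rho\phi\|_{\Hilbert,\Omega_s}^2+\|\rho^{-1}\partial_\varphi\phi\|_{\Hilbert,\Omega_s}^2$ satisfies $E_s[\phi]\le\omega(s)\,l[\phi]$ with $\omega(s)\to 0$ \emph{uniformly on $l$-bounded sets}. This is false: take $\phi_\epsilon(\rho,\varphi):=g(\rho/\epsilon)\sin(\varphi/(2a))$ with $g\in C_0^\infty((0,\infty))$ fixed. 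The integrals $\int|g'|^2\rho\,\der\rho$ and $\int|g|^2\rho^{-1}\,\der\rho$ are invariant under $\rho\mapsto\rho/\epsilon$, so $l[\phi_\epsilon]$ stays bounded as $\epsilon\to 0$ while the entire energy is carried by $\{\rho<C\epsilon\}$. Absolute continuity of the integral is a statement about a single function, not a uniform one, and the angular Poincar\'e inequality you cite controls $|\phi|^2$, not the derivatives appearing in $E_s$. Hence your assertion that $r_s$ is $L$-form-bounded with relative bound $\delta_s\to 0$ fails, and with it the route to~\eqref{nrs} as written.

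The argument can be salvaged, but only by invoking an ingredient you do not mention. In your final estimate you in fact only need the tail bound on $\tilde\phi=L^{-1}g$ with $\|g\|_\Hilbert\le 1$; the set $\{L^{-1}g:\|g\|_\Hilbert\le 1\}$ is \emph{precompact in $\Dom(l)$} (since $L^{-1/2}$ is compact on $\Hilbert$, so $L^{-1}=L^{-1/2}L^{-1/2}:\Hilbert\to\Dom(l)$ is compact), and on a precompact set the pointwise convergence $E_s[\phi]\to 0$ is automatically uniform. With this in hand your form resolvent identity does yield the conclusion. The paper proceeds differently and more softly: it uses an abstract criterion from~\cite{CK} reducing~\eqref{nrs} to showing $L_{s_n}^{-1}f_n\to L^{-1}f$ in $\Hilbert$ whenever $s_n\to\infty$ and $f_n\rightharpoonup f$ with $\|f_n\|=1$; this is checked by extracting a subsequence of $\phi_n:=L_{s_n}^{-1}f_n$ weakly convergent in $\Dom(l)$, upgrading to strong convergence in $\Hilbert$ via the compact embedding $\Dom(l)\hookrightarrow\Hilbert$, and passing to the limit in the weak resolvent equation by dominated convergence on the $\theta_{s_n}'$ terms. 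Both routes ultimately rest on compactness of the resolvent; the paper's avoids the uniformity trap you fell into, while yours—once repaired—would give an explicit rate.
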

\begin{proof}
First of all, we note that~$0$ belongs to the resolvent set of~$L$
and~$L_s$ for all $s \geq 0$.
In fact, by Proposition~\ref{Prop.equivalent}
(proved under hypothesis~\eqref{Ass}),
we have the Poincar\'e-type inequality
\begin{equation}\label{Poincare}
  l_s[\phi] \geq C^{-1} \, l[\phi]
  \geq C^{-1} \lambda_1 \|\phi\|_{\Hilbert}^2
\end{equation}
for every $\phi \in \Dom(l)$, 
where~$\lambda_1$ is the lowest eigenvalue of~$L$,
which is easily seen to be positive 
due to the positivity of the form~\eqref{l-form} 
and the fact that the spectrum of~$L$
is purely discrete (explicit value of~$\lambda_1$
is given in Proposition~\ref{Prop.spec} below).

To prove the uniform convergence~\eqref{nrs},
we shall use an abstract criterion from~\cite[App.]{CK} 
according to which it is enough to show that 
\begin{equation}\label{nrs.criterion}
  \lim_{n \to \infty} 
  \big\|L_{s_n}^{-1}f_n-L^{-1}f\big \|_{\mathcal{H}} 
  = 0
\end{equation}
for every sequence of numbers $\{s_n\}_{n \in \Nat} \subset \Real$
such that $s_n \to \infty$ as $n \to \infty$ 
and every sequence of functions $\{f_n\}_{n \in \Nat} \subset \mathcal{H}$
weakly converging to $f \in \Hilbert$ and such that
$\|f_n\|_{\mathcal{H}}=1$ for all $n \in \Nat$.

We set $\phi_n := L_{s_n}^{-1} f_n$, so that~$\phi_n$
satisfies the weak formulation of the resolvent equation
\begin{equation}\label{re}
  \forall v \in \Dom(l) \,, \qquad
  l_{s_n}(v,\phi_n)
  = (v,f_n)_{\mathcal{H}}
  \,.
\end{equation}
Choosing $v := \phi_n$ for the test function in~\eqref{re},
we have
\begin{equation}\label{resolvent.identity}
  l_{s_n}[\phi_n]
  = (\phi_n,f_n)_{\mathcal{H}}
  \leq \|\phi_n\|_{\mathcal{H}} \|f_n\|_{\mathcal{H}}
  = \|\phi_n\|_{\mathcal{H}}
  \,.
\end{equation}

Recalling~\eqref{Poincare},
we obtain from~\eqref{resolvent.identity}
the uniform bound
\begin{equation}\label{bound0}
  \|\phi_n\|_{\mathcal{H}} \leq \frac{C}{\lambda_1}
  \,.
\end{equation}
At the same time, employing the first inequality in~\eqref{Poincare},
the bounds~\eqref{resolvent.identity} and~\eqref{bound0} yield
\begin{equation}\label{bounds}
  \left\| \partial_\rho \phi_n \right\|_{\mathcal{H}}^2
  \leq \frac{C^2}{\lambda_1}
  \,, \qquad
  \left\|
  \frac{\partial_\varphi \phi_n}{\rho}
  \right\|_{\mathcal{H}}^2
  \leq \frac{C^2}{\lambda_1}
  \,, \qquad
  \big\|\rho\phi_n\big\|_{\mathcal{H}}^2 \leq \frac{16 \, C^2}{\lambda_1}
  \,.
\end{equation}

It follows from~\eqref{bound0} and~\eqref{bounds} 
that $\{\phi_n\}_{n \in \Nat}$ is a bounded sequence in $\Dom(l)$
equipped with the norm $\|\cdot\|_{l}$.
Therefore it is precompact in the weak topology of this space.
Let~$\phi_\infty$ be a weak limit point,
\ie, for an increasing sequence $\{n_j\}_{j\in\Nat} \subset \Nat$
such that $n_j \to \infty$ as $j \to \infty$,
$\{\phi_{n_j}\}_{j\in\Nat}$ converges weakly to~$\phi_\infty$ in~$\Dom(l)$.
Actually, we may assume that the sequence converges strongly in~$\mathcal{H}$
because~$\Dom(l)$ is compactly embedded in~$\mathcal{H}$.
Summing up,
\begin{equation}\label{weak1}
  \phi_{n_j} \xrightarrow[j \to \infty]{w} \phi_\infty
  \quad \mbox{in} \quad \Dom(l)
  \qquad \mbox{and} \qquad
  \phi_{n_j} \xrightarrow[j \to \infty]{} \phi_\infty
  \quad \mbox{in} \quad \mathcal{H}
  \,.
\end{equation}

Now we pass to the limit as $n \to \infty$ in~\eqref{re}.
Taking any test function $v \in C_0^1(U)$
in~\eqref{re}, with~$n$ being replaced by~$n_j$,
and sending~$j$ to infinity, we obtain from~\eqref{weak1} the identity
\begin{equation}\label{identity}
  l(v,\phi_\infty) = (v,f)_\mathcal{H}
  \,.
\end{equation}
In the limit, we have used~\eqref{Ass} 
to get rid of the terms containing~$\theta_{s_n}'$.
More specifically, we write
$$
  | (\partial_\rho v,\theta_{s_n}'\partial_\varphi\phi_n)_\Hilbert |
  \leq \|\rho \, \theta_{s_n}' \partial_\rho v\|_\Hilbert 
  \left\|\frac{\partial_\varphi\phi_n}{\rho}\right\|_\Hilbert 
  \,,
$$
where the second term on the right hand side is bounded due to~\eqref{bounds},
while 
$$
\begin{aligned}
  \|\rho \, \theta_{s_n}' \partial_\rho v\|_\Hilbert^2 
  &= \int_U \rho^2 \Euler^{s_n} \theta'(\Euler^{s_n/2}\rho)^2 \,
  |\partial_\rho v(\rho,\varphi)|^2 \, \rho \, \der \rho \, \der\varphi
  \xrightarrow[n \to \infty]{}
  0
\end{aligned}
$$
by the dominated convergence theorem
using~\eqref{Ass} (to get a dominating function)  
and $\theta'(r)=0$ for all sufficiently large~$r$
(to get a pointwise convergence). 
A similar argument holds for the other terms containing~$\theta_{s_n}'$. 

Since $C_0^1(U)$ is a core of~$l$,
then~\eqref{identity} holds true for all $v\in \Dom(l)$.
We conclude that $\phi_\infty = L^{-1} f$,
for \emph{any} weak limit point of $\{\phi_n\}_{n \in \Nat}$.
From the strong convergence of $\{\phi_{n_j}\}_{j\in\Nat}$,
we eventually conclude with~\eqref{nrs.criterion}.
\end{proof}
%

\subsection{A lower bound to the decay rate}
%
Since the operator~$L$ is naturally decoupled,
its spectrum is easy to find.
\begin{Proposition}\label{Prop.spec}
We have
\begin{equation}\label{set.sum}
  \sigma(L) = 
  \left\{
  n + \frac{1}{2} \left(1+\frac{m}{2a} \right)
  \right\}_{n \in \Nat, m \in \Nat^*}
  \,,
\end{equation}
where we use the convention $0 \in \Nat$
and denote $\Nat^* := \Nat\setminus\{0\}$.
\end{Proposition}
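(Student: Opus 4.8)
The plan is to diagonalise $L$ by separating the radial and angular variables and then to recognise each resulting one-dimensional radial operator as a generalised Laguerre operator, whose spectrum is classical. First I would use the orthogonal decomposition $\Hilbert = \sii((0,\infty),\rho\,\der\rho) \otimes \sii((0,2\pi a))$. The Dirichlet Laplacian $-\partial_\varphi^2$ on $(0,2\pi a)$ has the orthonormal basis of eigenfunctions $e_m(\varphi) := (\pi a)^{-1/2}\sin\!\big(m\varphi/(2a)\big)$ with eigenvalues $\mu_m := \big(m/(2a)\big)^2$, $m \in \Nat^*$. Expanding $\phi = \sum_{m \in \Nat^*} f_m \otimes e_m$, the form~\eqref{l-form} (together with the integrability condition $|y|\phi \in \Hilbert$) decouples into $l[\phi] = \sum_{m \in \Nat^*} t_m[f_m]$, where
\begin{equation*}
  t_m[f] := \int_0^\infty \Big( |f'|^2 + \frac{\mu_m}{\rho^2}\,|f|^2 + \frac{\rho^2}{16}\,|f|^2 \Big)\, \rho \,\der\rho
  \,,
\end{equation*}
so that $L = \bigoplus_{m \in \Nat^*} T_m$, where $T_m$ is the self-adjoint operator in $\sii((0,\infty),\rho\,\der\rho)$ generated by the closed form $t_m$; hence $\sigma(L) = \overline{\bigcup_{m \in \Nat^*} \sigma(T_m)}$.

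Next I would analyse $T_m$, which acts as $f \mapsto -\rho^{-1}(\rho f')' + \mu_m\rho^{-2} f + \tfrac{1}{16}\rho^2 f$. Writing $\nu := \sqrt{\mu_m} = m/(2a) > 0$, the finiteness of $t_m$ forces the regular behaviour $f = O(\rho^{\nu})$ near the origin (the other solution $\rho^{-\nu}$ makes $\int_0 \rho^{-2\nu - 1}\,\der\rho$ diverge). The substitution $t := \rho^2/4$, $f(\rho) = \rho^{\nu}\,\Euler^{-\rho^2/8}\,g(\rho^2/4)$, transforms the eigenvalue equation $T_m f = \lambda f$ into Laguerre's equation
\begin{equation*}
  t\,g'' + (\nu + 1 - t)\,g' + \Big( \lambda - \tfrac{1}{2}(\nu + 1) \Big)\,g = 0
  \,,
\end{equation*}
whose solutions that give rise to $f \in \sii((0,\infty),\rho\,\der\rho)$ are exactly the generalised Laguerre polynomials $g = L_n^{(\nu)}$, occurring at $\lambda = \lambda_{n,m} := n + \tfrac{1}{2}(1 + \nu) = n + \tfrac{1}{2}\big(1 + \tfrac{m}{2a}\big)$ with $n \in \Nat$. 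Since $\{L_n^{(\nu)}\}_{n \in \Nat}$ is an orthonormal basis of $\sii\big((0,\infty),t^{\nu}\Euler^{-t}\,\der t\big)$, the change of variables shows that the normalised functions $h_{n,m}(\rho) := c_{n,m}\,\rho^{\nu}\Euler^{-\rho^2/8}L_n^{(\nu)}(\rho^2/4)$ form an orthonormal basis of $\sii((0,\infty),\rho\,\der\rho)$, each lying in $\Dom(T_m)$ with $T_m h_{n,m} = \lambda_{n,m}\,h_{n,m}$. Hence $\sigma(T_m) = \{\lambda_{n,m}\}_{n \in \Nat}$ is purely discrete.

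Putting the pieces together, $\{h_{n,m} \otimes e_m\}_{n \in \Nat,\, m \in \Nat^*}$ is an orthonormal basis of $\Hilbert$ consisting of eigenfunctions of $L$, which gives $\sigma(L) = \{ n + \tfrac{1}{2}(1 + \tfrac{m}{2a}) \}_{n \in \Nat,\, m \in \Nat^*}$; the closure in $\sigma(L) = \overline{\bigcup_m \sigma(T_m)}$ is superfluous because $\inf\sigma(T_m) = \tfrac{1}{2}(1 + \tfrac{m}{2a}) \to \infty$ while each $\sigma(T_m)$ is discrete, so the union is locally finite. I expect the step needing the most care to be the rigorous orthogonal-sum decomposition $L = \bigoplus_m T_m$ --- namely that every element of $\Dom(l)$ expands in the $e_m$ with $l[\phi] = \sum_m t_m[f_m]$ and that no part of the spectrum is lost in the reduction --- together with the identification of the form realisation $T_m$ with the Laguerre operator that is regular at $\rho = 0$; pinning down the constants in the Laguerre substitution is routine. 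Alternatively one may bypass these points by exhibiting the orthonormal basis $\{h_{n,m} \otimes e_m\}$ of eigenfunctions of $L$ directly and invoking that an operator possessing an orthonormal basis of eigenfunctions has spectrum equal to the closure of the corresponding set of eigenvalues.
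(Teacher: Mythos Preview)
Your proposal is correct and follows essentially the same route as the paper: separate variables via the Dirichlet sine basis on $(0,2\pi a)$, reduce to the family of radial operators $-\rho^{-1}\partial_\rho\rho\partial_\rho+\nu^2\rho^{-2}+\rho^2/16$, and identify their spectra through the Laguerre substitution. The paper simply cites a reference for the radial spectrum whereas you carry out the substitution explicitly, and you are additionally (and rightly) careful about why the closure in $\sigma(L)=\overline{\bigcup_m\sigma(T_m)}$ adds no new points.
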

\begin{proof}
We have the direct-sum decomposition
(\cf~\cite[Ex.~X.1.4]{RS2})
$$
  L
  = \bigoplus_{m=1}^\infty
  L_m
  \,, \qquad
  L_m :=
  -\frac{1}{\rho} \, \partial_\rho \, \rho \, \partial_\rho 
  + \frac{\nu_m}{\rho^2} 
  + \frac{\rho^2}{16}
  \,,
$$
where, for each fixed $m \in \Nat^*$,
$L_m$ is an  operator in 
$\sii\big((0,\infty),\rho\,\der\rho\big)$
and
\begin{equation}\label{chi}
  \nu_m := \left(\frac{m}{2 a}\right)^2
  \qquad \mbox{and} \qquad
  \sin(\nu_m\varphi)
  \,,
\end{equation}
with $m \in \Nat^*$,
are respectively the eigenvalues 
and the corresponding eigenfunctions
of the operator $-\partial_\varphi^2$ in $\sii\big((0,2\pi a)\big)$, 
subject to Dirichlet boundary conditions.
Fixing $m\in\Nat^*$,
the eigenvalues and the corresponding eigenfunctions of 
the one-dimensional operator~$L_m$
are respectively given by (\cf~\cite[Prop.~3]{K7})
$$
  n + \frac{1+\sqrt{\nu_m}}{2}
  \qquad \mbox{and} \qquad
  \rho^{\nu_m} \, \Euler^{-\rho^2/8} \, L_n^{\nu_m}(\rho^2/4)
  \,,
$$
where $n \in \Nat$ and 
$L_n^{\mu}$ are the generalised Laguerre polynomials
(see, \eg, \cite[Sec.~6.2]{Andrews-Askey-Roy_2000}).
Summing up, using the decoupled form of~\eqref{op.noweight.straight},
the spectrum of~$L$ is composed of the eigenvalue sum
\begin{equation}\label{set.sum.bis}
  n + \frac{1+\sqrt{\nu_m}}{2} 
  \,, \qquad
  n \in \Nat \,, \ m\in\Nat^* \,,
\end{equation}
associated with the eigenfunctions
\begin{equation}\label{Laguerre}
  \rho^{\nu_m} \, \Euler^{-\rho^2/8} \, L_n^{\nu_m}(\rho^2/4)
  \, \sin(\nu_m\varphi) 
  \,.
\end{equation}
Formula~\eqref{set.sum.bis} leads to the set sum~\eqref{set.sum}.
\end{proof} 

As a consequence of Proposition~\ref{Prop.strong},
the eigenvalues of~$L_s$ converge to the eigenvalues of~$L$ as $s \to \infty$.
In particular, for the lowest eigenvalue we have
\begin{equation}\label{lowest}
  \lambda_{\theta,a}(\infty)
  = \lim_{s\to\infty}\lambda_{\theta,a}(s) = \inf\sigma(L) 
  = \frac{1}{2} + \frac{1}{4a} 
  \,,
\end{equation}
where the second equality follows from Proposition~\ref{Prop.spec}.
Recalling~\eqref{crucial}, we have thus established 
the following result.
\begin{Theorem}\label{Thm.main.lbound}
Let $\theta \in C^1((0,\infty))$ be such that~\eqref{Ass} holds
and $\supp\theta'$ is compact in~$\Real$. 
Then 
\begin{equation}\label{result.lbound}
  \Gamma_{\theta,a} \geq \frac{1}{2} + \frac{1}{4a} 
  \,.
\end{equation}
\end{Theorem}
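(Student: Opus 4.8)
The plan is to assemble the lower bound \eqref{result.lbound} directly from the energy-estimate machinery already in place, which reduces the whole statement to controlling the asymptotic spectral bound. The key chain of implications is: (i) the self-similarity transform preserves the $\Hilbert$-norm, \eqref{preserve}; (ii) the energy identity \eqref{energy}, which uses only the non-self-adjointness cancellation \eqref{nsa}; (iii) the spectral bound \eqref{spec.bound} applied to $l_s[\phi(s)]$; (iv) Gr\"onwall's inequality \eqref{Gronwall}; and (v) unwinding the change of variables \eqref{relationship}, \eqref{noweight} to translate the decay of $\|\phi(s)\|_\Hilbert$ into a decay of $\|\psi(t)\|_\Hilbert$, hence of $\|\Euler^{t\Delta_D^\Omega}\|_{\sii_w(\Omega)\to\sii(\Omega)}$, which is exactly what the definition \eqref{rate} of $\Gamma_{\theta,a}$ measures. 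This is precisely the content of the ``crucial estimate'' \eqref{crucial}, namely $\Gamma_{\theta,a}\ge\lambda_{\theta,a}(\infty)=\liminf_{s\to\infty}\lambda_{\theta,a}(s)$, so the proof of Theorem~\ref{Thm.main.lbound} amounts to identifying $\lambda_{\theta,a}(\infty)$.

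Next I would evaluate $\lambda_{\theta,a}(\infty)$. By Proposition~\ref{Prop.strong} (valid under \eqref{Ass} with $\supp\theta'$ compact), $L_s \to L$ in the norm-resolvent sense as $s\to\infty$. Since $L$ has compact resolvent (the embedding $\Dom(l)\hookrightarrow\Hilbert$ is compact, as noted after Proposition~\ref{Prop.equivalent}), norm-resolvent convergence forces convergence of each eigenvalue of $L_s$ to the corresponding eigenvalue of $L$; in particular the lowest eigenvalue $\lambda_{\theta,a}(s)$ converges to $\inf\sigma(L)$. Then Proposition~\ref{Prop.spec} gives $\sigma(L)=\{n+\tfrac12(1+\tfrac{m}{2a})\}_{n\in\Nat,\,m\in\Nat^*}$, whose infimum is attained at $n=0$, $m=1$, yielding $\inf\sigma(L)=\tfrac12+\tfrac1{4a}$, which is \eqref{lowest}. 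Combining with \eqref{crucial} gives \eqref{result.lbound}.

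The main obstacle is not in Theorem~\ref{Thm.main.lbound} itself — given everything established above its proof is essentially a citation of \eqref{crucial}, \eqref{lowest} — but rather in the ingredients it rests on, above all Proposition~\ref{Prop.strong}: one must show that the ``twist'' terms involving $\theta_s'(\rho)=\Euler^{s/2}\theta'(\Euler^{s/2}\rho)$ disappear in the limit. The delicate point is that $\theta_s'$ does \emph{not} go to zero pointwise everywhere in any obvious uniform way — it is a rescaling that concentrates near $\rho=0$ — so one cannot simply bound $\|\theta_s'\|_\infty$. The resolution already sketched in the excerpt is to pair $\theta_s'\partial_\varphi\phi_n$ against a fixed test function $v\in C_0^1(U)$ and estimate $\|\rho\,\theta_s'\,\partial_\rho v\|_\Hilbert^2 = \int_U \rho^2\Euler^{s_n}\theta'(\Euler^{s_n/2}\rho)^2|\partial_\rho v|^2\,\rho\,\der\rho\,\der\varphi$, which tends to $0$ by dominated convergence: hypothesis \eqref{Ass} (i.e.\ $|r\theta'(r)|\le C$) furnishes the dominating function $C^2\rho^{-1}|\partial_\rho v|^2$, while $\theta'(r)=0$ for large $r$ (compact support) gives pointwise convergence to zero of the integrand. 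Combined with the uniform $\Dom(l)$-bounds \eqref{bound0}–\eqref{bounds} extracted from the resolvent identity, this lets one pass to the limit in the weak formulation \eqref{re} and identify the weak limit point as $L^{-1}f$, whence \eqref{nrs.criterion} and \eqref{nrs} follow via the abstract criterion of \cite[App.]{CK}.

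Once Proposition~\ref{Prop.strong} is secured, the remaining pieces — the energy identity, Gr\"onwall, and the explicit spectrum — are routine, so I would state the proof of Theorem~\ref{Thm.main.lbound} in one or two lines: combine \eqref{crucial} with \eqref{lowest}. The complementary upper bound $\Gamma_{\theta,a}\le\tfrac12+\tfrac1{4a}$ (Theorem~\ref{Thm.optimal}) is a genuinely separate matter not covered by this argument and presumably requires constructing a near-optimal initial datum (e.g.\ built from the ground-state Laguerre--sine eigenfunction \eqref{Laguerre} of $L$) and showing the heat flow cannot decay faster than its projection onto that mode allows.
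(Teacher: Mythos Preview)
Your proposal is correct and matches the paper's approach exactly: the paper also presents Theorem~\ref{Thm.main.lbound} as an immediate consequence of the crucial estimate~\eqref{crucial} together with~\eqref{lowest}, the latter obtained from Proposition~\ref{Prop.strong} (norm-resolvent convergence $L_s\to L$) and Proposition~\ref{Prop.spec} (explicit spectrum of~$L$). Your additional commentary on the mechanism behind Proposition~\ref{Prop.strong} and the separate nature of the upper bound is accurate but already covered earlier in the paper.
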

%

\subsection{An upper bound to the decay rate
-- proof of Theorem~\ref{Thm.main}}
%
In view of Theorem~\ref{Thm.main.lbound},
it remains to show that the lower bound to~$\Gamma_{\theta,a}$ is optimal.
\begin{Theorem}\label{Thm.optimal}
Let $\theta \in C^1((0,\infty))$ be such that~\eqref{Ass} holds 
and $\supp\theta'$ is compact in~$\Real$.
Then 
\begin{equation}\label{result.optimal}
  \Gamma_{\theta,a} \leq \frac{1}{2} + \frac{1}{4a} 
  \,.
\end{equation}
\end{Theorem}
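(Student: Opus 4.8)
The plan is to establish the upper bound by exhibiting a family of initial data whose associated heat flow decays no faster than $(1+t)^{-(1/2+1/(4a))}$ in the $\sii_w(\Omega) \to \sii(\Omega)$ operator norm. The natural candidate is the ground-state-type profile of the limiting self-similar operator $L$, namely (using $m=1$, $n=0$ in~\eqref{Laguerre})
\begin{equation*}
  \Phi_1(\rho,\varphi) := \rho^{\sqrt{\nu_1}} \, \Euler^{-\rho^2/8} \, \sin(\sqrt{\nu_1}\,\varphi)
  \,, \qquad \sqrt{\nu_1} = \frac{1}{2a}
  \,,
\end{equation*}
which is the eigenfunction of $L$ corresponding to $\lambda_1 = \inf\sigma(L) = \tfrac12+\tfrac1{4a}$. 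In the original variables this corresponds to the initial datum $\psi_0 := w^{-1/2}\Phi_1$ (pulled back to $\Omega$ via $\mathcal{U}^{-1}$), which does lie in $\sii_w(\Omega)$ since $w^{1/2}\psi_0 = \Phi_1 \in \Hilbert$.

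The key steps, in order. First I would reduce, as in the lower-bound argument, to the self-similar picture: by~\eqref{preserve} and the relationship~\eqref{relationship}, the decay of $\|\Euler^{tH}\psi_0\|_{\Hilbert}$ in $t$ is governed by the decay of $\Euler^{-s/2}\|\phi(s)\|_{\Hilbert}$, where $\phi$ solves~\eqref{Cauchy.noweight} with $\phi_0 = \Phi_1$. Thus it suffices to show $\|\phi(s)\|_{\Hilbert}$ does not decay exponentially faster than $\Euler^{-\lambda_1 s}$; more precisely, that
\begin{equation*}
  \liminf_{s\to\infty} \frac{1}{s}\log\frac{1}{\|\phi(s)\|_{\Hilbert}} \leq \lambda_1
  \,,
\end{equation*}
which combined with~\eqref{crucial}-type bookkeeping yields $\Gamma_{\theta,a} \leq \lambda_1$. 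Second, I would obtain a lower bound on $\|\phi(s)\|_{\Hilbert}$ by testing the evolution against a fixed function --- the cleanest choice is $\Phi_1$ itself (or its normalization $\Phi_1/\|\Phi_1\|_{\Hilbert}$). Setting $g(s) := (\phi(s),\Phi_1)_{\Hilbert}$, one differentiates using~\eqref{Cauchy.noweight} to get
\begin{equation*}
  g'(s) = -\big(\phi(s), (L_s + M_s)\Phi_1\big)_{\Hilbert}
  = -\lambda_1 \, g(s) - \big(\phi(s),(L_s - L)\Phi_1\big)_{\Hilbert} - \big(\phi(s),M_s\Phi_1\big)_{\Hilbert}
  \,,
\end{equation*}
where I used $L\Phi_1 = \lambda_1\Phi_1$. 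The two remainder terms involve $\theta_s'$ and, by the scaling~\eqref{scaled} together with~\eqref{Ass} and the compact support of $\theta'$, decay like $\Euler^{-s/2}$ times a bound controlled by $\|\phi(s)\|_{\Hilbert}$ and the fixed smooth profile $\Phi_1$ (here one uses that $\Phi_1$ and its derivatives decay rapidly, so the spatial integrals against the rescaled, shrinking support of $\theta_s'$ are $O(\Euler^{-s/2})$). Third, feeding $\|\phi(s)\|_{\Hilbert} \leq \|\phi_0\|_{\Hilbert} = \|\Phi_1\|_{\Hilbert}$ (from~\eqref{energy}--\eqref{Gronwall}, since $l_s \geq 0$, or more simply from the contractivity on $\Hilbert$ implicit in~\eqref{nsa}) into the remainder, Gronwall's inequality for $g$ gives $g(s) \geq \big(g(0) - c\int_0^\infty \Euler^{-\tau/2}\,\der\tau\big)\Euler^{-\lambda_1 s}$, but one must be careful that $g(0) = \|\Phi_1\|_{\Hilbert}^2 > 0$ dominates; this is guaranteed because the remainder is integrable and, by replacing $\psi_0$ by a large multiple if necessary (or by starting the estimate from a late time $s_0$ after which the accumulated remainder is small), $g(s) \geq c_0 \Euler^{-\lambda_1 s}$ for some $c_0 > 0$ and all large $s$. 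Since $|g(s)| \leq \|\phi(s)\|_{\Hilbert}\|\Phi_1\|_{\Hilbert}$ by Cauchy--Schwarz, this forces $\|\phi(s)\|_{\Hilbert} \geq (c_0/\|\Phi_1\|_{\Hilbert})\Euler^{-\lambda_1 s}$, and transferring back via~\eqref{ISST} and~\eqref{preserve} yields $\|\Euler^{tH}\psi_0\|_{\Hilbert} \geq c' (1+t)^{-\lambda_1}$, whence $\Gamma_{\theta,a} \leq \lambda_1 = \tfrac12 + \tfrac1{4a}$.

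The main obstacle I anticipate is not any single estimate but the bookkeeping around the lower bound $g(0) > $ accumulated error: one needs the perturbation terms $(L_s-L)\Phi_1$ and $M_s\Phi_1$ to be not merely small for large $s$ but \emph{integrable} in $s$ over $[0,\infty)$, and near $s=0$ (where $\theta_s' = \theta'$ is unrescaled and may even be singular at $\rho=0$ under~\eqref{Ass}) this requires a slightly delicate argument. The clean way around it, as indicated, is to prove the lower bound on $g$ only for $s \geq s_0$ with $s_0$ chosen large enough that $c\int_{s_0}^\infty \Euler^{-\tau/2}\,\der\tau < \tfrac12\inf_{s\in[0,s_0]}|g(s)|$ --- here one also needs $g(s)$ to stay bounded away from zero on the compact interval $[0,s_0]$, which follows from continuity of $s\mapsto g(s)$ (a consequence of~\eqref{sol.smooth}) provided $g$ does not vanish on $[0,s_0]$; if it does, one perturbs the initial datum slightly, or observes that by the positivity-preserving property of $\Euler^{-tH}$ (Beurling--Deny) and the positivity of $\Phi_1$ away from the boundary, $g(s) > 0$ for all $s \geq 0$ whenever $\psi_0 \geq 0$, $\psi_0 \not\equiv 0$. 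This positivity remark is, I expect, the right tool: choosing $\psi_0$ so that $\phi_0 = w^{1/2}\psi_0$ is a nonnegative multiple of (a smooth approximation of) $\Phi_1$ makes $g(s) > 0$ automatic, and the remaining estimates are then routine consequences of the scaling~\eqref{scaled}, the hypotheses on $\theta'$, and the rapid decay of the Laguerre--Gaussian profile $\Phi_1$.
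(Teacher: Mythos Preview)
Your approach is workable and genuinely different from the paper's. Both begin with the same initial datum $\psi_0 = w^{-1/2}\Phi_1$, but the paper bypasses your projection/ODE argument entirely. Instead it exploits the compact support of~$\theta'$ directly: since $\theta_s'(\rho)=0$ for $\rho \geq R_s := \Euler^{-s/2}R$, the operator $L_s+M_s$ coincides with~$L$ on the exterior region $[R_s,\infty)\times(0,2\pi a)$, and the paper asserts that on that region the solution is explicitly $\phi(\rho,\varphi,s)=\Euler^{-s\lambda_1}\Phi_1(\rho,\varphi)$; the lower bound then follows in one line by restricting the norm to $\rho>R\geq R_s$. This sidesteps all of your bookkeeping about integrability of the remainder and positivity of $g(s)$. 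Your perturbative argument, by contrast, is more robust: it does not hinge on $\theta'$ being compactly supported and would extend, with little modification, to~$\theta'$ merely decaying sufficiently fast at infinity---precisely the extension the paper conjectures after Theorem~\ref{Thm.main}. One caution on your side: for $a\geq 1/2$ the strong expression $(L_s-L)\Phi_1$ may fail to lie in~$\Hilbert$ near $\rho=0$ under the bare hypothesis~\eqref{Ass}, so the remainder should be read as the form difference $l_s(\phi(s),\Phi_1)-l(\phi(s),\Phi_1)$ and controlled via the energy bound $\int_0^\infty l[\phi(\tau)]\,\der\tau<\infty$ (which follows from~\eqref{energy} and Proposition~\ref{Prop.equivalent}).
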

\begin{proof}
By definition~\eqref{rate},
it is enough to find an initial datum $\psi_0 \in \Hilbert_w$
such that the solution of~\eqref{Cauchy.straight} satisfies the inequality
$\|\psi(t)\|_{\Hilbert} \geq c \, (1+t)^{-\lambda_{\theta,a}(\infty)}$
for all $t \geq 0$ with some positive constant~$c$
that may depend on~$\psi_0$.
We choose $\psi_0(r,\varphi):=w(x)^{-1/2} \phi_0(r,\varphi)$, where
$$
  \phi_0(r,\varphi) := r^{\nu_1} \, \Euler^{-|x|^2/8} 
  \, \sin(\nu_1\varphi) 
$$
is the eigenfunction of~$L$
corresponding to the lowest eigenvalue~\eqref{lowest}
(\cf~\eqref{Laguerre},
where we have used the identities $L_0^{\mu}=1$ and $H_0=1$).
Then the function~$\phi$ defined by~\eqref{noweight} and~\eqref{SST} 
solves~\eqref{Cauchy.noweight}, 
where~$\psi$ is the solution of~\eqref{Cauchy.straight}. 
Let~$R$ be such that $\theta'(r)=0$ for all $r \geq R$.
By~\eqref{relationship},
$\theta_s'(\rho)=0$ for all $\rho \geq \Euler^{-s/2} R =: R_s$.
Since the action of~$L_s$ coincides with the action of~$L$
wherever $\theta_s'=0$, we have the explicit solution
$$
  \phi(\rho,\varphi,s) = \Euler^{-s \lambda_{\theta,a}(\infty)} 
  \phi_0(\rho,\varphi)
  \,.
$$
for every $s \geq 0$ and
$
  (\rho,\varphi) \in 
  [R_s,\infty) \times (0,2\pi a) 
$.
Recalling~\eqref{preserve} and~\eqref{noweight}
together with the relationship~\eqref{relationship}, 
we get
$$
\begin{aligned}
  \|\psi(t)\|_{\Hilbert}
  = \|w^{-1/2}\phi(s)\|_{\Hilbert}
  &\geq \|\chi_{R_s} \, w^{-1/2}\phi(s)\|_{\Hilbert}
  \\
  &= \Euler^{-s \lambda_{\theta,a}(\infty)}
  \|\chi_{R_s} \, \psi_0\|_{\Hilbert}
  \\
  & \geq \Euler^{-s \lambda_{\theta,a}(\infty)}
  \|\chi_{R} \,\psi_0\|_{\Hilbert}
  \\
  & =(1+t)^{-\lambda_{\theta,a}(\infty)} \, 
  \|\chi_{R} \,\psi_0\|_{\Hilbert}
\end{aligned}
$$
for all $t \geq 0$,
where~$\chi_r$ denotes the characteristic function 
of the set $(r,\infty) \times (0,2\pi a)$.
\end{proof}

Theorem~\ref{Thm.main} follows as a consequence 
of Theorems~\ref{Thm.main.lbound} and~\ref{Thm.optimal}.

\subsection{From normwise to pointwise bounds
-- proof of Corollary~\ref{Thm.point}}
%
Corollary~\ref{Thm.point} follows as a consequence of 
this more general result.
\begin{Theorem}\label{Thm.point.bis}
Let $\theta \in C^1((0,\infty))$ be such that~\eqref{Ass} holds 
and $\supp\theta'$ is compact in~$\Real$.
For any positive numbers~$\eps$ and~$\delta$, 
there exists a constant $C_{\delta,\eps}$
such that the solution~$u$ of~\eqref{Cauchy} 
with an arbitrary initial datum $u_0 \in \sii_w(\Omega)$ obeys
\begin{equation}\label{result.point.bis}
  \forall t \geq \eps \,, \qquad
  \|u(t)\|_{L^\infty(\Omega)}
  \leq 
  C_{\delta,\eps} \, (1+t-\eps)^{-\Gamma_{\theta,a}+\delta} \,
  \|u_0\|_{\sii_w(\Omega)} 
  \,,
\end{equation}
where~$\Gamma_{\theta,a}$ is given by~\eqref{result}.
\end{Theorem}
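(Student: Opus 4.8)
The plan is to upgrade the $L^2$-decay estimate~\eqref{solution.rate} (equivalently, the bound~\eqref{Gronwall}) to an $L^\infty$-bound by exploiting the smoothing effect of the heat semigroup over the bounded time interval $[0,\eps]$ and then running the self-similar machinery on $[\eps,\infty)$. Concretely, split the evolution as $u(t) = \Euler^{(t-\eps)\Delta_D^\Omega}\,u(\eps)$ for $t\geq\eps$. First I would use ultracontractivity of the Dirichlet heat semigroup on $\Omega$ (which holds since $\Omega$ is quasi-conical and satisfies a uniform interior cone condition, so the classical Nash/Sobolev heat-kernel bounds $\|\Euler^{\tau\Delta_D^\Omega}\|_{L^2(\Omega)\to L^\infty(\Omega)}\leq c\,\tau^{-d/4}$ with $d=2$ apply for $0<\tau\leq 1$, say) to pass from an $L^2$-bound at time $\eps/2$ to the desired $L^\infty$-bound at later times. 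Thus

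\begin{equation*}
  \|u(t)\|_{L^\infty(\Omega)}
  = \big\|\Euler^{(t-\eps/2)\Delta_D^\Omega}\,u(\eps/2)\big\|_{L^\infty(\Omega)}
  \leq c\,(t-\eps/2)^{-1/2}\,\|u(\eps/2)\|_{L^2(\Omega)}
\end{equation*}

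for $t\geq\eps$, but this alone only gives the trivial $L^2$-in-$L^2$-out decay, so the point is to route the intermediate step through the \emph{weighted} space.

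The key observation is that for $\tau>0$ the heat semigroup maps $L^2(\Omega)$ into $\sii_w(\Omega)$: indeed $\Euler^{\tau\Delta_D^\Omega}$ has a Gaussian kernel bound $p_\Omega(x,y;\tau)\leq c\,\tau^{-1}\Euler^{-|x-y|^2/(4\tau)}$ (upper bound by the free kernel, by domain monotonicity), and a direct computation shows $\|\Euler^{(\eps/4)\Delta_D^\Omega}f\|_{\sii_w(\Omega)}\leq C_\eps\,\|f\|_{L^2(\Omega)}$ because the Gaussian decay of the kernel in $|x-y|$ beats the weight $\Euler^{|x|^2/4}$ once $\tau$ is small enough (here $\tau=\eps/4$ is the relevant small time, and the condition $4\tau<1$, \ie\ $\eps<4$, can be arranged by first shrinking $\eps$, the general case following by semigroup composition). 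Therefore, for $t\geq\eps$, writing $u(\eps/2)=\Euler^{(\eps/4)\Delta_D^\Omega}u(\eps/4)$ and then applying the $\sii_w\to L^2$ decay~\eqref{solution.rate} to the piece $\Euler^{(t-\eps/2)\Delta_D^\Omega}$ acting on $u(\eps/2)\in\sii_w(\Omega)$, followed by ultracontractivity on the last sliver of time, I would chain:

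\begin{equation*}
  \|u(t)\|_{L^\infty(\Omega)}
  \leq c\,\|u(t-\tfrac{\eps}{4})\|_{L^2(\Omega)}
  \leq c\,C_\delta\,(1+t-\tfrac{\eps}{2})^{-\Gamma_{\theta,a}+\delta}\,\|u(\tfrac{\eps}{4})\|_{\sii_w(\Omega)}
  \leq C_{\delta,\eps}\,(1+t-\eps)^{-\Gamma_{\theta,a}+\delta}\,\|u_0\|_{L^2(\Omega)},
\end{equation*}

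where the final inequality absorbs constants and uses $\|u(\eps/4)\|_{\sii_w}\leq C_\eps'\|u(\eps/8)\|_{L^2}\leq C_\eps'\|u_0\|_{L^2}\leq C_\eps'\|u_0\|_{\sii_w}$ together with the elementary comparison $(1+t-\eps/2)^{-\gamma}\leq 2^{\gamma}(1+t-\eps)^{-\gamma}$ for $t\geq\eps$; note also $\|u_0\|_{L^2(\Omega)}\leq\|u_0\|_{\sii_w(\Omega)}$ since $w\geq 1$. Restating with $\Gamma_{\theta,a}$ given by~\eqref{result} yields~\eqref{result.point.bis}, and Corollary~\ref{Thm.point} is the case $\eps=1$ after renaming $\tilde C_\delta:=C_{\delta,1}$ and using $t-1\geq 0\Rightarrow (1+t-1)^{-\gamma}=t^{-\gamma}$ together with $t\geq 1$.

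The main obstacle I anticipate is justifying the two heat-kernel inputs rigorously in the present geometry: (i) the ultracontractivity estimate with the sharp exponent $d/4=1/2$ on the unbounded, non-smooth domain $\Omega$ — this needs the Faber–Krahn / Nash inequality for $H_0^1(\Omega)$, which follows from the Euclidean Sobolev inequality restricted to $\Omega$ (monotonicity of the Dirichlet form), so it is standard but must be invoked carefully; and (ii) the $L^2(\Omega)\to\sii_w(\Omega)$ boundedness of $\Euler^{\tau\Delta_D^\Omega}$ for small $\tau$, for which the Gaussian upper bound $p_\Omega\leq p_{\Real^2}$ and the explicit integral $\int_{\Real^2}\Euler^{-|x-y|^2/(4\tau)}\Euler^{|x|^2/4}\,\der x<\infty$ (finite precisely when $\tau<1$) do the job. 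Everything else is routine semigroup bookkeeping; the scheme is the same as in the analogous passages of~\cite{KZ2} and~\cite{CK}, to which I would refer for the details common to both settings.
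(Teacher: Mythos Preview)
Your overall strategy coincides with the paper's: split the semigroup, use the Gaussian heat-kernel upper bound to pass from $L^2$ to $L^\infty$ over a fixed time window of length $\eps$, and feed in the weighted decay~\eqref{solution.rate} for the remaining piece. The paper carries this out in the simplest possible order: write $u(t)=\Euler^{\eps\Delta_D^\Omega}u(t-\eps)$, bound $|u(x,t)|\leq c_\eps\|u(t-\eps)\|_{L^2(\Omega)}$ by Schwarz and the free kernel bound $k(x,x',\eps)\leq(4\pi\eps)^{-1}\Euler^{-|x-x'|^2/(4\eps)}$, and then apply~\eqref{solution.rate} directly on $[0,t-\eps]$ with the given initial datum $u_0\in L^2_w(\Omega)$.

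Your chain, however, contains a genuine error. The claim that $\Euler^{\tau\Delta_D^\Omega}$ maps $L^2(\Omega)$ boundedly into $L^2_w(\Omega)$ for small $\tau$ is false: the Gaussian decay of the kernel in $|x-y|$ does \emph{not} beat the weight $\Euler^{|x|^2/4}$ uniformly, because $y$ may itself be large. Take bump functions $f_n$ translated along a ray inside $\Omega$; then $\|f_n\|_{L^2(\Omega)}$ is constant while $\|\Euler^{\tau\Delta_D^\Omega}f_n\|_{L^2_w(\Omega)}\to\infty$, since the solution stays localised near the translated centre where $w$ explodes. Equivalently, the integral $\int\Euler^{-|x-y|^2/(4\tau)}\Euler^{|x|^2/4}\,\der x$ that you cite is finite for each fixed $y$ when $\tau<1$ but is not bounded uniformly in $y$, which is what an operator bound would need.

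The good news is that this step is also unnecessary. Since $u_0\in L^2_w(\Omega)$ by hypothesis, you can invoke~\eqref{solution.rate} on $[0,t-\eps]$ from the start, with no need to re-enter the weighted space at an intermediate time. Deleting the $L^2\to L^2_w$ step and running
\[
  \|u(t)\|_{L^\infty(\Omega)}
  \leq c_\eps\,\|u(t-\eps)\|_{L^2(\Omega)}
  \leq c_\eps\,C_\delta\,(1+t-\eps)^{-\Gamma_{\theta,a}+\delta}\,\|u_0\|_{L^2_w(\Omega)}
\]
reproduces the paper's argument verbatim.
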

\begin{proof}
Using the semigroup property, the solution~$u$ of~\eqref{Cauchy} satisfies
$$
  u(t) = \Euler^{t \Delta_D^\Omega} u_0
  = \Euler^{\eps \Delta_D^\Omega} \Euler^{(t-\eps) \Delta_D^\Omega} u_0
  = \Euler^{\eps \Delta_D^\Omega} u(t-\eps)
$$ 
for every $t \geq \eps > 0$. 
By~\cite[Thm.~2.1.6]{Davies_1989}, the heat kernel $k(x,x',t)$ 
of $\Euler^{t \Delta_D^\Omega}$ is bounded by the heat kernel
in the whole Euclidean space, \ie,  
\begin{equation}\label{Davies}
  0 \leq k(x,x',t) \leq (4\pi t)^{-1} \, 
  \Euler^{-|x-x'|^2/(4 t)}
\end{equation}
for every $t \in (0,\infty)$ and $x,x' \in \Omega$.
Using the Schwarz inequality and~\eqref{Davies}, 
we get
$$
\begin{aligned}
  |u(x,t)|
  &= \left| \int_\Omega k(x,x',\eps) \, u(x',t-\eps) \, \der x' \right|
  \\
  &\leq \|u(t-\eps)\|_{\sii(\Omega)} \
  \sqrt{\int_\Omega k(x,x',\eps)^2  \, \der x'}
  \\
  &\leq \|u(t-\eps)\|_{\sii(\Omega)} \
  (4\pi\eps)^{-1} \, 
  \sqrt{\int_{\Real^d} \Euler^{-|x-x'|^2/(2 \eps)}  \, \der x'}
  \\
  &= \|u(t-\eps)\|_{\sii(\Omega)} \
  (4\pi\eps)^{-1} \, (2\pi\eps)^{1/2}
\end{aligned}  
$$
for every $t \geq \eps > 0$ and $x \in \Omega$.
Denoting 
$
  c_{\eps} := (4\pi\eps)^{-1} \, (2\pi\eps)^{1/2}
$
and using~\eqref{solution.rate}, we eventually obtain 
$$
  \|u(t)\|_{L^\infty(\Omega)}
  \leq c_{\eps} \,  
  C_\delta \, (1+t-\eps)^{-\Gamma_{\theta,a}+\delta} \,
  \|u_0\|_{\sii_w(\Omega)} 
$$
for every $t \geq \eps > 0$ and $\delta>0$.
\end{proof}
%

\section{The Hardy inequality}\label{Sec.end}
%
This section is devoted to a proof of Theorem~\ref{Thm.Hardy}.
We again use the curvilinear coordinates of Section~\ref{Sec.curve}.

\subsection{An immediate Hardy inequality}
Employing~\eqref{h-form}, it is easy to establish~\eqref{Hardy} with $c=0$,
that is, 
\begin{equation}\label{Hardy.immediate}
   \forall  u \in H_0^1(\Omega)
  \,, \qquad
  \int_{\Omega} |\nabla u(x)|^2 \, \der x
  \geq \frac{1}{4 a^2} 
  \int_{\Omega} \frac{|u(x)|^2}{|x|^2} \, \der x 
  \,.
\end{equation}
Indeed, it is enough to estimate the angular part of the gradient in~\eqref{h-form}
by the lowest eigenvalue of the operator $-\partial_\varphi^2$ 
in $\sii((0,2\pi a))$, subject to Dirichlet boundary conditions, 
\ie~to use the Poincar\'e-type inequality (\cf~\eqref{chi}),
\begin{equation}\label{Poincare.elementary}
  \forall  u \in H_0^1((0,2\pi a))
  \,, \qquad
  \int_0^{2 \pi a} |f'(\varphi)|^2 \, \der \varphi
  \geq \frac{1}{4 a^2} 
  \int_0^{2 \pi a} |f(\varphi)|^2  \, \der \varphi
  \,,
\end{equation}
and simply neglect the other term in~\eqref{h-form}.

By a test-function argument,
it is also easy to see that the immediate Hardy inequality~\eqref{Hardy.immediate}
is optimal for straight wedges (\ie~$\theta'=0$), 
in the sense that the operator
$-\Delta_D^{\Omega_0} - (4a^2|x|^2)^{-1} - V(x)$
(where the first sum should be understood in a form sense)
possesses negative eigenvalues for any non-negative non-trivial 
potential $V \in C_0^\infty(\Omega_0)$.
Alternatively, the claim can be found in
\cite[Ex.~11.1]{Devyver-Fraas-Pinchover_2014}
(see also \cite[Ex.~1.4]{Devyver-Pinchover-Psaradakis}),
where the question of optimal Hardy weights for elliptic operators
is treated in a great generality.

The content of our Theorem~\ref{Thm.Hardy} is that 
a positive term can be added on the right hand side of~\eqref{Hardy.immediate}
provided that~$\theta'$ is not identically equal to zero. 
Our approach employs some ideas of~\cite{KZ1}.

\subsection{An improved local Hardy inequality}
First, we establish an improved Hardy inequality,
for which the added term on the right hand side of~\eqref{Hardy.immediate} 
is not positive everywhere in~$\Omega$. 

Given a positive number~$R$, define
$
  U_R := (0,R) \times (0,2\pi a)
$.
Using~\eqref{layer}, $\Omega_R := \mathcal{L}(U_R)$
is a bounded subset of~$\Omega$. 
We set
\begin{equation}\label{lambda}
  \lambda(R,\theta')
  :=
  \inf_{\stackrel[\psi\not=0]{}{\psi \in C_0^1(U)}}
  \frac{\displaystyle \int_{U_R} 
  \left[
  \left|\big(\partial_r-\theta'(r)\partial_\varphi\big)\psi\right|^2
  + \left|\frac{\partial_\varphi \psi}{r}\right|^2
  - \frac{1}{4a^2} \left|\frac{\psi}{r}\right|^2
  \right]
  r \, \der r \, \der\varphi}
  {\displaystyle \int_{U_R} |\psi|^2 \, r \, \der r \, \der\varphi} 
  \,.
\end{equation}
We emphasise that the test functions~$\psi$ 
are restrictions of functions from the whole~$U$,
so that the minimisers of~\eqref{lambda} satisfy 
a Neumann boundary condition on $\{R\} \times (0,2\pi a)$.
The following result follows easily from Proposition~\ref{Prop.domain}
and definition~\eqref{lambda}.
\begin{Theorem}\label{Thm.Hardy.local}
Let $\theta \in C^1((0,\infty))$ be such that~\eqref{Ass} holds. 
We have
\begin{equation}\label{Hardy.local}
  \forall  u \in H_0^1(\Omega)
  \,, \qquad
  \int_{\Omega} |\nabla u(x)|^2 \, \der x
  - \frac{1}{4 a^2} 
  \int_{\Omega} \frac{|u(x)|^2}{|x|^2} \, \der x 
  \geq 
  \lambda(R,\theta') \int_{\Omega_R}|u(x)|^2 \, \der x 
  \,.
\end{equation}
\end{Theorem}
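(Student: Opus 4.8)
The plan is to reduce the claimed Hardy inequality~\eqref{Hardy.local} directly to the variational quantity~\eqref{lambda}, exploiting the unitary equivalence established in Section~\ref{Sec.curve}. First I would note that, by a density argument, it suffices to prove~\eqref{Hardy.local} for $u \in C_0^1(\Omega)$, since both sides are continuous with respect to the $H_0^1(\Omega)$ norm (the left-hand side because of~\eqref{Hardy.immediate}, which already guarantees the quadratic form on the left is nonnegative and bounded by $Q_D^\Omega$, and the right-hand side trivially). Given such a $u$, I set $\psi := u \circ \mathcal{L} = \mathcal{U}u \in C_0^1(U)$ and apply the unitary transform~$\mathcal{U}$ of~\eqref{unitary}. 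By Proposition~\ref{Prop.domain}, the full Dirichlet energy becomes
\begin{equation*}
  \int_{\Omega} |\nabla u|^2 \, \der x
  = \int_U
  \left[
  \left|\big(\partial_r-\theta'(r)\partial_\varphi\big)\psi\right|^2
  + \left|\frac{\partial_\varphi \psi}{r}\right|^2
  \right]
  r \, \der r \, \der\varphi ,
\end{equation*}
while $\int_\Omega |x|^{-2}|u|^2\,\der x = \int_U r^{-2}|\psi|^2\, r\,\der r\,\der\varphi$ because $|\mathcal{L}(r,\varphi)| = r$ and $\det G = r^2$ by~\eqref{metric}.

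Next I would split the region of integration $U = U_R \cup \big((R,\infty)\times(0,2\pi a)\big)$. On the outer piece $(R,\infty)\times(0,2\pi a)$, I simply discard the first (radial) term in the integrand and estimate the angular term by the Poincar\'e inequality~\eqref{Poincare.elementary} applied for each fixed $r$, which gives that the outer contribution to the left-hand side is nonnegative; this is exactly the computation behind the immediate inequality~\eqref{Hardy.immediate} but restricted to $r > R$. Therefore
\begin{equation*}
  \int_{\Omega} |\nabla u|^2 \, \der x - \frac{1}{4a^2}\int_\Omega \frac{|u|^2}{|x|^2}\,\der x
  \geq \int_{U_R}
  \left[
  \left|\big(\partial_r-\theta'\partial_\varphi\big)\psi\right|^2
  + \left|\frac{\partial_\varphi \psi}{r}\right|^2
  - \frac{1}{4a^2}\left|\frac{\psi}{r}\right|^2
  \right] r\,\der r\,\der\varphi .
\end{equation*}
By the very definition~\eqref{lambda} of $\lambda(R,\theta')$ as an infimum over $\psi \in C_0^1(U)$ (whose restrictions to $U_R$ form the competing class, with free Neumann behaviour on $\{R\}\times(0,2\pi a)$), the right-hand side is bounded below by $\lambda(R,\theta')\int_{U_R}|\psi|^2\,r\,\der r\,\der\varphi = \lambda(R,\theta')\int_{\Omega_R}|u|^2\,\der x$, which is~\eqref{Hardy.local}.

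Two small points need attention. One is that the test class in~\eqref{lambda} consists of restrictions of $C_0^1(U)$ functions rather than all of $H^1(U_R)$ with the relevant trace conditions; since our $\psi$ arising from $u \in C_0^1(\Omega)$ is genuinely in $C_0^1(U)$, its restriction is an admissible competitor, so no extra approximation is required at this stage. The other is that $\lambda(R,\theta')$ could a priori be $-\infty$, but then~\eqref{Hardy.local} is vacuous; in fact one checks it is finite (indeed $\geq$ the threshold coming from~\eqref{Hardy.immediate}, which shows the bracketed integrand over all of $U$ is nonnegative, hence over $U_R$ the infimum is bounded below once one accounts for the missing outer positivity — a routine bound). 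I do not expect any serious obstacle here: the statement is essentially a bookkeeping consequence of Proposition~\ref{Prop.domain}, the localisation of~\eqref{Hardy.immediate} to $r>R$, and the definition of $\lambda(R,\theta')$. The genuine work — showing $\lambda(R,\theta') > 0$ when $\theta'\not\equiv 0$, and converting the compactly supported improvement into the global weight $1/(1+|x|^2\log^2|x|)$ of Theorem~\ref{Thm.Hardy} — is deferred to the subsequent subsections and is not needed for this lemma.
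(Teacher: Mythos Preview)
Your argument is correct and is precisely the route the paper has in mind: the paper merely states that the theorem ``follows easily from Proposition~\ref{Prop.domain} and definition~\eqref{lambda}'', and your write-up spells out exactly those two steps (together with the Poincar\'e inequality~\eqref{Poincare.elementary} to discard the contribution from $r>R$, which the paper leaves implicit). There is nothing to add or correct.
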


Of course, \eqref{Hardy.local}~represents an improvement 
upon~\eqref{Hardy.immediate} only if the number $\lambda(R,\theta')$
is positive.
In this case we call~\eqref{Hardy.local} a local Hardy inequality.
It turns out that it can be always achieved
provided that~$\Omega$ is non-trivially curved.

\begin{Proposition}\label{Prop.positive}
Let $\theta \in C^1((0,\infty))$ be such that~\eqref{Ass} holds. 
If $\theta' \not= 0$, then there exists a positive number~$R_0$ 
such that
$$
  \lambda(R,\theta') > 0
$$
for all $R \geq R_0$.
\end{Proposition}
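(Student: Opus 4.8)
The plan is to argue by contradiction, exploiting the compactness built into the restricted domain $U_R$. Suppose $\lambda(R,\theta') \le 0$ for a sequence $R_k \to \infty$; since $R \mapsto \lambda(R,\theta')$ is nonincreasing (a larger $U_R$ admits more competitors in the Rayleigh quotient while the denominator only grows), it suffices to derive a contradiction from the single assumption $\lambda(R,\theta')\le 0$ for all large $R$. Fix such a large $R$, to be specified. First I would note that the quadratic form in the numerator of~\eqref{lambda}, restricted to $U_R$, is (after subtracting $\tfrac{1}{4a^2}|\psi/r|^2$ and using the Poincar\'e inequality~\eqref{Poincare.elementary} in the $\varphi$-variable) nonnegative, and that $C_0^1(U)$ restricted to $U_R$ extends to a form whose domain is compactly embedded in $L^2(U_R, r\,\der r\,\der\varphi)$ — this is the analogue of Rellich on the bounded cylinder $U_R$, valid because the relevant weights $r$ and $1/r$ are bounded above and below away from $0$ only near $r=0$, where the conical weight still yields compactness by the standard one-dimensional Hardy/Poincar\'e estimate. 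Hence the infimum $\lambda(R,\theta')$ is attained by some $\psi_R \not\equiv 0$ in the form domain, satisfying the Neumann condition at $\{R\}\times(0,2\pi a)$ as emphasised after~\eqref{lambda}.

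Next I would extract information from the Euler--Lagrange equation. If $\lambda(R,\theta') \le 0$, then the minimiser satisfies, weakly,
\begin{equation*}
  -\frac{1}{r}\big(\partial_r-\theta'\partial_\varphi\big) r \big(\partial_r-\theta'\partial_\varphi\big)\psi_R
  - \frac{1}{r^2}\partial_\varphi^2 \psi_R
  - \frac{1}{4a^2 r^2}\psi_R
  = \lambda(R,\theta')\,\psi_R \le 0
\end{equation*}
in $U_R$, with Dirichlet data on $\varphi\in\{0,2\pi a\}$ and Neumann data at $r=R$. The key point is that equality $\lambda(R,\theta')=0$ would force $\psi_R$ to be, in the region $\{r \ge R_1\}$ where $\theta'=0$ (with $R_1$ the radius beyond which $\supp\theta'$ is exhausted — available by hypothesis), a nonnegative solution of the critical equation $-\Delta\psi_R = (4a^2|x|^2)^{-1}\psi_R$ meeting a Neumann condition at $r=R$; and the optimality of~\eqref{Hardy.immediate} for straight wedges (the ground-state/criticality structure recalled from \cite{Devyver-Fraas-Pinchover_2014}, \cite{Devyver-Pinchover-Psaradakis}) pins down the behaviour of any such solution — its angular profile must be $\sin(\nu_1\varphi)$ and its radial profile must be $r^{1/(2a)}$, i.e. $\psi_R \propto r^{\nu_1}\sin(\nu_1\varphi)$ there. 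Plugging this rigid form back in, one then checks that it is incompatible with $\psi_R$ being a \emph{global} minimiser: on the support of $\theta'$ the operator genuinely differs from the straight one — here is where $\theta'\not\equiv 0$ enters — and testing the form with this candidate, or rather with a localised modification of it, produces a strictly negative contribution from the cross term $-r^2\theta'(r)\,\partial_r\psi\,\partial_\varphi\psi$ (after symmetrisation, a term controlled by $\theta'^2$ together with the indefinite piece), which can be made to \emph{lower} the Rayleigh quotient below $0$ only if it was already strictly below; unwinding, $\lambda(R,\theta')<0$ cannot persist as $R\to\infty$ because the minimiser would have to concentrate, but the quotient is scale-subcritical away from $r=0$ — contradiction. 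More cleanly: one shows directly that as $R \to \infty$ the minimisers, if the eigenvalue stayed $\le 0$, would converge (after the compact-resolvent argument, in the spirit of Proposition~\ref{Prop.strong}) to a minimiser on all of $U$ of the straight problem with eigenvalue $\le 0$, contradicting optimality~\eqref{Hardy.immediate}, which gives infimum exactly $0$ \emph{not attained}.

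The cleanest route, which I would actually write up: suppose for contradiction $\lambda(R_k,\theta')\le 0$ with $R_k\to\infty$, normalise the minimisers $\|\psi_k\|_{L^2(U_{R_k},r)} = 1$, and use~\eqref{elementary1} to bound $\|\partial_r\psi_k\|$, $\|\partial_\varphi\psi_k/r\|$ uniformly (the indefinite term being absorbed by Poincar\'e~\eqref{Poincare.elementary} with a margin, since $\nu_1 = 1/(4a^2)$ is the \emph{first} eigenvalue and the quotient is $\ge 0$ up to $O(\theta'^2)$ which is supported in a fixed compact set). Extract a weak limit $\psi_\infty$ in the natural space over $U$; the mass could escape to $r=\infty$, so here the main obstacle is a \emph{non-concentration/non-escape} argument — this is the step I expect to be hardest. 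I would handle it by the Neumann condition together with a Hardy-type weighted estimate showing that the tail $\int_{r>T}|\psi_k|^2 r$ is small uniformly in $k$ once $T$ is large, using that beyond $\supp\theta'$ the form is exactly the straight critical form whose ground state at energy $0$ on $(T,\infty)$ has no $L^2$ profile — i.e. a limiting-absorption / subcriticality-at-infinity argument. Granting non-escape, $\psi_\infty$ is a nonzero minimiser over $U$ of the straight quotient with value $\le 0$, contradicting that~\eqref{Hardy.immediate} holds with the \emph{sharp} constant and is \emph{not attained} on $H_0^1(\Omega_0)$. Hence $\lambda(R,\theta')>0$ for all sufficiently large $R =: R_0$, and monotonicity in $R$ finishes the claim.
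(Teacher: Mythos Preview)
Your proposal has genuine gaps and is much more elaborate than necessary. The paper's argument is short and works for each fixed~$R$ separately --- no limit $R\to\infty$, no non-escape step, and no compact-support assumption on~$\theta'$ (which, incidentally, is \emph{not} part of the hypothesis here; only~\eqref{Ass} is assumed, so your use of ``the radius $R_1$ beyond which $\supp\theta'$ is exhausted'' is illegitimate).

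Here is what you missed. You correctly observe that by the angular Poincar\'e inequality~\eqref{Poincare.elementary} the numerator of~\eqref{lambda} is nonnegative, hence $\lambda(R,\theta')\ge 0$ always. So the only thing to rule out is $\lambda(R,\theta')=0$. Take any $R_0$ with $\theta'\not\equiv 0$ on $(0,R_0)$ and fix $R\ge R_0$. The boundedness of~$U_R$ gives that the infimum is attained by some smooth~$\psi$. If $\lambda(R,\theta')=0$, then \emph{each} of the two nonnegative pieces of the numerator vanishes separately:
\[
  \int_{U_R}\!\Bigl[\Bigl|\tfrac{\partial_\varphi\psi}{r}\Bigr|^2-\tfrac{1}{4a^2}\Bigl|\tfrac{\psi}{r}\Bigr|^2\Bigr]\,r\,\der r\,\der\varphi=0
  \quad\text{and}\quad
  \int_{U_R}\bigl|(\partial_r-\theta'\partial_\varphi)\psi\bigr|^2\,r\,\der r\,\der\varphi=0.
\]
The first equality (equality case in~\eqref{Poincare.elementary} for a.e.~$r$) forces $\psi(r,\varphi)=g(r)\sin(\nu_1\varphi)$. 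Substituting into the second and integrating in~$\varphi$ (the cross term drops by orthogonality of $\sin(\nu_1\varphi)$ and $\cos(\nu_1\varphi)$) gives
\[
  \int_0^R|g'(r)|^2\,r\,\der r=0
  \qquad\text{and}\qquad
  \int_0^R\theta'(r)^2|g(r)|^2\,r\,\der r=0.
\]
Hence $g$ is a nonzero constant and $\theta'\equiv 0$ on $(0,R)$, contradicting the choice of~$R_0$. Done.

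By contrast, your ``cleanest route'' leaves the non-escape step unproved (you yourself flag it as the hardest), and its conclusion is slightly off: the weak limit~$\psi_\infty$ would be a zero-energy state for the $\theta'$-\emph{dependent} form on all of~$U$, not for the straight one, so invoking non-attainment of the sharp constant in~\eqref{Hardy.immediate} does not directly apply --- you would still have to run the separation-of-variables argument above, at which point the whole limiting procedure was unnecessary. Your monotonicity claim for $R\mapsto\lambda(R,\theta')$ is also not justified as stated: the competitor set $C_0^1(U)$ is independent of~$R$, and both numerator and denominator of the Rayleigh quotient grow with~$R$, so the quotient need not be monotone.
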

\begin{proof}
Let~$R_0$ be any positive number for which~$\theta'$
is not identically equal to zero on $(0,R_0)$.
Then, of course, $\theta'\not=0$ on $(0,R)$ for every $R \geq R_0$. 
Because of the boundedness of~$U_R$, one can show that 
the infimum in~\eqref{lambda} is achieved 
by a function $\psi \in \sii(U_R, r \, \der r \, \der\varphi)$  
satisfying
\begin{equation}\label{integrals}
  \int_{U_R} 
  \left[
  \left|\big(\partial_r-\theta'(r)\partial_\varphi\big)\psi\right|^2
  \right]
  r \, \der r \, \der\varphi
  < \infty
  \qquad \mbox{and} \qquad
  \int_{U_R} 
  \left[
  \left|\frac{\partial_\varphi \psi}{r}\right|^2
  - \frac{1}{4a^2} \left|\frac{\psi}{r}\right|^2
  \right]
  r \, \der r \, \der\varphi
  < \infty
  \,.
\end{equation}
Moreover, by elliptic regularity, $\psi$~is smooth in~$U_R$.
Now let us assume, by contradiction, that $\lambda(R,\theta')=0$.
Then the integrals in~\eqref{integrals} are simultaneously
equal to zero due to~\eqref{Poincare.elementary}.
From the vanishing of the second integral, we obtain
$$
  \psi(r,\varphi) = g(r) \sin(\nu_1\varphi)
  \,,
$$
where~$\nu_1$ is the first angular eigenvalue defined in~\eqref{chi}
and~$g$ is a smooth function.  
Plugging this separated function~$\psi$ 
into the first integral in~\eqref{integrals},
putting it equal to zero and integrating by parts, 
we conclude with the two identities
$$
  \int_0^R |g'(r)|^2 \, r \, \der r = 0
  \qquad \mbox{and} \qquad
  \int_0^R \theta'(r)^2 \, |g(r)|^2 \, r \, \der r = 0
  \,.
$$
It follows that $\theta' = 0$ on $(0,R)$, a contradiction.
\end{proof}

\subsection{An improved global Hardy inequality
-- proof of Theorem~\ref{Thm.Hardy}}
In the next step, we produce from the local Hardy inequality~\eqref{Hardy.local}
the desired inequality~\eqref{Hardy}, with an everywhere positive Hardy weight. 
Here the main ingredient is the following classical one-dimensional Hardy inequality,
which we present without proof (\cf~\cite[Lem.~3.1]{CK}). 
\begin{Lemma}\label{Lem.log}
Let $r_0 > 0$. We have 
$$
  \forall g \in C_0^1((r_0,\infty)) 
  \,, \qquad
  \int_{r_0}^\infty |g'(r)|^2 \, r \, \der r
  \geq \frac{1}{4} \int_{r_0}^\infty 
  \frac{|g(r)|^2}{r^2 \log^2(r/r_0)} \, r \, \der r
  \,.
$$
\end{Lemma}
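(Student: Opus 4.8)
The plan is to reduce the weighted inequality to the classical one-dimensional Hardy inequality on the half-line by means of a logarithmic change of variables, and then to prove the latter by the elementary completing-the-square technique (equivalently, the ground-state substitution).

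First I would substitute $t := \log(r/r_0)$, so that $r = r_0 \, \Euler^t$ and $r \, \der r = r^2 \, \der t$; as~$r$ ranges over $(r_0,\infty)$, the variable~$t$ ranges over $(0,\infty)$. Setting $h(t) := g(r_0\,\Euler^t)$, one has $h'(t) = r\,g'(r)$, and a direct computation turns the left-hand side into $\int_0^\infty |h'(t)|^2 \, \der t$ and the right-hand side into $\frac{1}{4} \int_0^\infty |h(t)|^2/t^2 \, \der t$. Since $g \in C_0^1((r_0,\infty))$, the transformed function obeys $h \in C_0^1((0,\infty))$, so it remains to establish the classical Hardy inequality
$$
  \int_0^\infty |h'(t)|^2 \, \der t
  \geq \frac{1}{4} \int_0^\infty \frac{|h(t)|^2}{t^2} \, \der t
  \,.
$$
I would prove this by expanding the manifestly nonnegative quantity $\int_0^\infty |h'(t) - h(t)/(2t)|^2 \, \der t \geq 0$. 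The square yields the two desired terms together with a cross term $-\int_0^\infty (|h|^2)'/(2t) \, \der t$; integrating this by parts and using $\der(1/t)/\der t = -1/t^2$ converts it into $-\frac{1}{2}\int_0^\infty |h|^2/t^2 \, \der t$, which combines with the $+\frac{1}{4}$ term to give exactly the inequality after rearrangement. As an alternative avoiding the change of variables, the same factorization can be carried out directly in the variable~$r$ by testing the nonnegative square $\int_{r_0}^\infty |g'(r) - g(r)/(2r\log(r/r_0))|^2 \, r \, \der r \geq 0$, which reproduces the claimed inequality of Lemma~\ref{Lem.log} after one integration by parts.

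The only point requiring genuine care — and hence the main (if modest) obstacle — is the vanishing of the boundary terms produced by the integration by parts. In the variable~$t$ these arise at $t = 0$ and $t = \infty$, and in the variable~$r$ at $r = r_0$ and $r = \infty$. At the outer endpoint they vanish because $h$ (respectively~$g$) is compactly supported; at the inner endpoint the potentially singular factor $1/t$ (respectively $1/\log(r/r_0)$) is harmless precisely because the compact support of~$g$ in the \emph{open} interval $(r_0,\infty)$ forces $h$ to vanish identically in a neighbourhood of $t = 0$. Consequently no density or closure argument is needed: the estimate holds as an exact identity-plus-square for every admissible test function, and the sharp constant~$\frac{1}{4}$ emerges automatically from the choice of the factorizing shift.
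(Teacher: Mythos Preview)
Your argument is correct: the logarithmic substitution $t=\log(r/r_0)$ cleanly reduces the weighted inequality to the standard Hardy inequality on $(0,\infty)$, and your completing-the-square proof of the latter (with the boundary terms vanishing because $g\in C_0^1((r_0,\infty))$ forces $h$ to vanish near both endpoints) is valid and yields the sharp constant~$\tfrac{1}{4}$.

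As for comparison, there is nothing to compare against: the paper does not prove Lemma~\ref{Lem.log} at all, stating it ``without proof'' and referring to \cite[Lem.~3.1]{CK}. You have supplied a complete elementary proof where the paper gives none.
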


Now we are in a position to prove Theorem~\ref{Thm.Hardy}.
\begin{proof}[Proof of Theorem~\ref{Thm.Hardy}]
By virtue of Proposition~\ref{Prop.domain},
it is enough to prove
\begin{equation}\label{Hardy.enough}
  \tilde{h}[\psi] :=
  h[\psi] - \frac{1}{4a^2} \left\|\frac{\psi}{r}\right\|_\Hilbert^2
  \geq c \int_U \frac{|\psi|^2}{1+r^2\log^2(r)} \, r \, \der r \, \der\varphi
\end{equation}
for every $\psi \in C_0^1(U)$.
From now on, we fix some $R>0$ such that the support of~$\theta'$ 
lies inside the interval $[0,R]$. 
We note that,
since~$\theta$ is assumed to be $C^1$-smooth up to the boundary point~$0$
and the support of~$\theta'$ is compact in~$\Real$,
the condition~\eqref{Ass} is satisfied and~$\theta'$ is bounded. 

First, we shall apply Lemma~\ref{Lem.log} with $r_0 := R/2$.
Let $\xi : (0,\infty) \to [0,1]$ be a smooth function 
with support disjoint with the interval $[0,r_0]$
and such that $\xi=1$ on $(R,\infty)$.
(We keep the same notation~$\xi$ for the function $\xi \otimes 1$ 
on $(0,\infty) \times (0,2\pi a)$.)
Writing $\psi = \xi\psi + (1-\xi)\psi$
and using Lemma~\ref{Lem.log} with help of Fubini's theorem,
we get
\begin{eqnarray}\label{smear}
\lefteqn{
  \int_{U}
  \frac{|\psi|^2}{1+r^2\log^2(r/r_0)} \, r \, \der r \, \der\varphi
}
  \nonumber \\
  &&\leq
  2 \int_{U}
  \frac{|\xi \psi|^2}{r^2\log^2(r/r_0)} \ r \, \der r \, \der\varphi
  +  2 \int_{U}
  |(1-\xi) \psi|^2 \, r \, \der r \, \der\varphi
  \nonumber \\
  &&\leq
  8 \int_{U}
  |\partial_r(\xi \psi)|^2 \ r \, \der r \, \der\varphi
  +  2 \int_{U_R}
  |\psi|^2 \, r \, \der r \, \der\varphi
  \nonumber \\
  &&\leq
  16\int_{U}
  |\partial_r\psi|^2 \ r \, \der r \, \der\varphi
  + 16 \int_{U_R}
  |\partial_r \xi|^2 \, |\psi|^2 \ r \, \der r \, \der\varphi
  +  2 \int_{U_R}
  |\psi|^2 \, r \, \der r \, \der\varphi
  \nonumber \\
  &&\leq
  16 \int_{U}
  |\partial_r \psi|^2 \ r \, \der r \, \der\varphi
  + \left( 16 \, \|\xi'\|_\infty^2 + 2 \right)
  \int_{U_R}
  |\psi|^2 \, r \, \der r \, \der\varphi
  \,.
\end{eqnarray}
Here $\|\xi'\|_\infty$ is the supremum norm of the derivative of~$\xi$
as a function on~$(0,\infty)$.

Second, to apply~\eqref{smear}, we need to estimate~$\tilde{h}[\psi]$
by the integral involving the radial derivative of~$\psi$.  
It can be achieved by adapting~\eqref{elementary1}
as follows
\begin{align}\label{smear2}
  \tilde{h}[\psi] 
  & \geq \int_U 
  \left[
  \epsilon \, |\partial_r\psi|^2
  + \left|\frac{\partial_\varphi \psi}{r}\right|^2
  \left(
  1 - \frac{\epsilon}{1-\epsilon} \, |r\,\theta'(r)|^2
  \right)
  - \frac{1}{4a^2} \left|\frac{\psi}{r}\right|^2  
  \right]
  r \, \der r \, \der\varphi 
  \nonumber \\
  & \geq \int_U 
  \left[
  \epsilon \, |\partial_r\psi|^2
  -\frac{\epsilon}{1-\epsilon} \, |\theta'(r)|^2 \, \frac{|\psi|^2}{4a^2}
  \right]
  r \, \der r \, \der\varphi 
  \nonumber \\
  & \geq 
  \epsilon \int_U |\partial_r\psi|^2 \, r \, \der r \, \der\varphi 
  -\frac{\epsilon}{1-\epsilon} \, \frac{\|\theta'\|_\infty^2}{4a^2} 
  \int_{U_R} |\psi|^2 \, r \, \der r \, \der\varphi 
  \,.
\end{align}
Here the second inequality is due to~\eqref{Poincare.elementary}
after choosing~$\epsilon$ sufficiently small comparing to the supremum
norm of the function  $r \mapsto r\,\theta'(r)$. 

Finally, by Theorem~\ref{Thm.Hardy.local}, we have
\begin{equation}\label{smear3}
  \tilde{h}[\psi] \geq 
  \lambda(R,\theta') \int_{U_R} |\psi|^2 \, r \, \der r \, \der\varphi 
  \,.
\end{equation}
Combining~\eqref{smear} with~\eqref{smear2} and
interpolating the result with~\eqref{smear3},
we get
\begin{multline*}
  \tilde{h}[\psi] \geq 
  \delta \,\frac{\epsilon}{16}
  \int_U \frac{|\psi|^2}{1+r^2\log^2(r)} \, r \, \der r \, \der\varphi
  \\
  + \left[ 
  (1-\delta) \, \lambda(R,\theta') 
  - \delta \frac{\epsilon}{1-\epsilon} \, \frac{\|\theta'\|_\infty^2}{4a^2} 
  - \delta \, \epsilon \left(\|\xi'\|_\infty^2 + \frac{1}{8}\right)
  \right]
  \int_{U_R} |\psi|^2 \, r \, \der r \, \der\varphi 
\end{multline*}
with any $\delta > 0$.
Since $\lambda(R,\theta')$ is positive due to Proposition~\ref{Prop.positive}
and our hypothesis about~$\theta'$,
we can choose $\delta>0$ in such a way that the square bracket vanishes
and obtain~\eqref{Hardy.enough} with
$$
  c \geq \delta \,\frac{\epsilon}{16} \,
  \inf_{r \in (0,\infty)} \frac{1+r^2\log^2(r)}{1+r^2\log^2(r/r_0)}
  >0
$$
This concludes the proof of Theorem~\ref{Thm.Hardy}.
\end{proof}

\subsection{A conjecture about the heat flow}
Let~$T_D^\Omega$ denote the self-adjoint operator 
associated with the closure of of the quadratic form
\begin{equation}\label{form.new}
  \dot{t}_D^\Omega[u] := \int_{\Omega} |\nabla u(x)|^2 \, \der x
  - \frac{1}{4 a^2} 
  \int_{\Omega} \frac{|u(x)|^2}{|x|^2} \, \der x 
  \,, \qquad
  \Dom(\dot{t}_D^\Omega) := C_0^1(\Omega)
  \,.
\end{equation}
Under the hypotheses of Theorem~\ref{Thm.Hardy} (namely, $\theta' \not= 0$),
$T_D^\Omega$~satisfies a Hardy inequality (\cf~\eqref{Hardy}),
while there is no Hardy inequality for~$T_D^{\Omega_0}$ 
corresponding to a straight wedge.
In the language of~\cite{Pinchover_2013},
$T_D^\Omega$ (with $\theta' \not= 0$) and~$T_D^{\Omega_0}$
are \emph{subcritical} and \emph{critical} operators, respectively.
In accordance with general conjectures stated in 
\cite[Sec.~6]{KZ1} and \cite[Conj.~1]{FKP},
we expect that the heat semigroup associated with $T_D^\Omega$ (with $\theta' \not= 0$)
should decay faster comparing to the heat semigroup associated with $T_D^{\Omega_0}$.

More specifically, to deal with the fact that 
the operators~$T_D^{\Omega}$ and~$T_D^{\Omega_0}$ 
act in different Hilbert spaces,
let us consider instead
$$
  H_D^\Omega := \mathcal{U} \, T_D^{\Omega} \, \mathcal{U}^{-1}
  \qquad\mbox{and}\qquad
  H_D^{\Omega_0} := \mathcal{U}_0 \, T_D^{\Omega_0} \, \mathcal{U}_0^{-1}
  \,,
$$
where the unitary transforms~$\mathcal{U}$ and~$\mathcal{U}_0$
are introduced in Section~\ref{Sec.curve} (recall also~\eqref{diagram}).
The operators~$H_D^{\Omega}$ and~$H_D^{\Omega_0}$ act in 
the same Hilbert space~$\Hilbert$ introduced in~\eqref{unitary}.
Then the general conjecture from~\cite[Sec.~6]{KZ1}
reads as follows:
\begin{Conjecture}
Let $\theta \in C^1([0,\infty))$ 
be such that $\supp\theta'$ is compact in~$\Real$ and $\theta' \not= 0$.
Then there exists a positive function $w: U \to \Real$ such that
$$
  \lim_{t \to \infty}
  \frac{\left\| 
  \, \Euler^{-t \, H_D^\Omega} \, 
  \right\|_{\Hilbert_w \to \Hilbert}}
  {\left\| 
  \, \Euler^{-t \, H_D^{\Omega_0}}
  \right\|_{\Hilbert_w \to \Hilbert}}
  = 0
  \,,
$$
where the weighted space~$\Hilbert_w$ is defined as in~\eqref{analogue.weighted}.
\end{Conjecture}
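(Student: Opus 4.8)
The plan is to run the method of self-similar variables of Section~\ref{Sec.proof} once more, but for the ``critical'' operators obtained by subtracting the optimal Hardy weight $(4a^2r^2)^{-1}$ in the curvilinear coordinates of Section~\ref{Sec.curve}; the weight in the Conjecture will be taken to be the Gaussian~\eqref{weighted}. Applying the transformations~\eqref{relationship}, \eqref{SST} and~\eqref{noweight} to the Cauchy problem for $\Euler^{-tH_D^\Omega}$ leads to the non-autonomous evolution governed by $\mathcal{L}_s\dot{+}M_s$, where $M_s$ is unchanged (so~\eqref{nsa} still holds) and $\mathcal{L}_s := L_s - (4a^2\rho^2)^{-1}$ is the operator associated with the form $\tilde l_s[\phi] := l_s[\phi] - \tfrac{1}{4a^2}\|\phi/\rho\|_\Hilbert^2$, the objects $l_s,L_s,M_s$ being those of~\eqref{Cauchy.noweight} and $\theta_s'$ that of~\eqref{scaled}. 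Since Theorem~\ref{Thm.Hardy} is scale-covariant --- the curvilinear Hardy form $\tilde h$ of~\eqref{Hardy.enough} transforms into itself, up to renaming $\theta'$ as $\theta_s'$, under the dilation $r\mapsto\Euler^{-s/2}r$ --- it yields $\tilde l_s\ge 0$ for all $s$, and, together with the $s$-uniform comparison of Proposition~\ref{Prop.equivalent}, this lets the analysis of Section~\ref{Sec.proof} be repeated verbatim. The energy identity $\tfrac12\tfrac{\der}{\der s}\|\phi(s)\|_\Hilbert^2 = -\tilde l_s[\phi(s)]$ (with the regularity of~\eqref{sol.smooth}) together with Gronwall's inequality then gives, exactly as in~\eqref{energy}--\eqref{crucial},
$$
  \big\|\Euler^{-tH_D^\Omega}\big\|_{\Hilbert_w\to\Hilbert} \le \Euler^{-\int_0^{\log(1+t)}\mu_\tau\,\der\tau}
  \,, \qquad
  \mu_s := \inf\sigma(\mathcal{L}_s)
  \,.
$$

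For the straight wedge one has $\theta_s'\equiv 0$, so $\mathcal{L}_s$ reduces to the $s$-independent decoupled operator $L-(4a^2\rho^2)^{-1}$. Its spectrum is computed exactly as in Proposition~\ref{Prop.spec}, the only change being that the angular eigenvalues $\nu_m$ are shifted to $\nu_m-\tfrac{1}{4a^2}=\tfrac{m^2-1}{4a^2}$; the marginal channel $m=1$ thus becomes the centrifugal-free oscillator $-\tfrac1\rho\partial_\rho\rho\partial_\rho+\tfrac{\rho^2}{16}$ with eigenvalues $n+\tfrac12$, so $\inf\sigma\big(L-(4a^2\rho^2)^{-1}\big)=\tfrac12$. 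Hence the displayed bound gives $\big\|\Euler^{-tH_D^{\Omega_0}}\big\|_{\Hilbert_w\to\Hilbert}\le C(1+t)^{-1/2}$, while the matching lower bound $\big\|\Euler^{-tH_D^{\Omega_0}}\big\|_{\Hilbert_w\to\Hilbert}\ge c(1+t)^{-1/2}$ follows from the test-function argument of the proof of Theorem~\ref{Thm.optimal}, with $\psi_0:=w^{-1/2}\phi_0$ and $\phi_0(\rho,\varphi):=\Euler^{-\rho^2/8}\sin(\varphi/(2a))$ the ground state of $L-(4a^2\rho^2)^{-1}$, which evolves self-similarly (the criticality of $H_D^{\Omega_0}$ being precisely what forbids any further improvement). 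Thus $\big\|\Euler^{-tH_D^{\Omega_0}}\big\|_{\Hilbert_w\to\Hilbert}\asymp(1+t)^{-1/2}$, and, in view of the bound above, the Conjecture reduces to proving
$$
  \int_0^s\mu_\tau\,\der\tau-\frac s2\xrightarrow[s\to\infty]{}+\infty
  \,.
$$

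The crux is a quantitative lower bound on the excess of $\mu_s$ over $\tfrac12$. By~\eqref{relationship} the support of $\theta_s'$ is contained in $[0,R\Euler^{-s/2}]\times(0,2\pi a)$, a neighbourhood of the conical tip that shrinks as $s\to\infty$, whereas the local gap of Proposition~\ref{Prop.positive} scales up, $\lambda\big(R\Euler^{-s/2},\theta_s'\big)=\Euler^{s}\lambda(R,\theta')$ (by the same dilation: the numerator in~\eqref{lambda} is scale-invariant while the denominator picks up a factor $\Euler^{-s}$). Evaluating $\tilde l_s$ on the unperturbed ground state $\phi_0$, the twist contribution $\int_U\big[(\theta_s')^2|\partial_\varphi\phi_0|^2-2\,\theta_s'\,\partial_\varphi\phi_0\,\partial_\rho\phi_0\big]\rho\,\der\rho\,\der\varphi$ converges, as $s\to\infty$, to a \emph{positive} constant proportional to $\int_0^\infty\theta'(r)^2\,r\,\der r$ (positive exactly because $\theta'\not\equiv 0$; the cross term integrates to zero by parity in~$\varphi$). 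Hence $\mu_s$ cannot return to $\tfrac12$ along the $\phi_0$-direction, and can approach $\tfrac12$ only through functions that are small on $\supp\theta_s'$. But forcing a radial profile to drop from its bulk value to (nearly) zero across the annulus $R\Euler^{-s/2}<\rho<O(1)$ costs, by the elementary logarithmic-capacity estimate $\int_{\rho_0}^{\rho_1}|g'|^2\rho\,\der\rho\ge|g(\rho_1)-g(\rho_0)|^2/\log(\rho_1/\rho_0)$ (in the spirit of Lemma~\ref{Lem.log}), an extra energy of order $1/s$. The competition between this $O(1/s)$ cost and the $O(1)$ penalty of not paying it should yield $\mu_s\ge\tfrac12+c/s$ for some $c>0$ and all large $s$; then $\int_0^s\mu_\tau\,\der\tau\ge\tfrac s2+c\log s+O(1)$, so $\big\|\Euler^{-tH_D^\Omega}\big\|_{\Hilbert_w\to\Hilbert}\le C(1+t)^{-1/2}\big(\log(1+t)\big)^{-c}$ and the quotient in the Conjecture tends to~$0$.

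Turning this heuristic into a rigorous bound on $\mu_s$ is the main obstacle, and is why the statement is only conjectural. The natural route is an IMS-type localization of $\tilde l_s$ into a region near the tip --- where Proposition~\ref{Prop.positive} supplies the genuine local gap which, traded against the logarithmic capacity, yields the $c/s$ bonus --- and the complementary region, where $L-(4a^2\rho^2)^{-1}$ already exceeds $\tfrac12$ by a fixed spectral gap. The delicate points are: controlling the non-self-adjoint first-order terms $\theta_s'\partial_\varphi$ uniformly in $s$ (via $|\rho\theta_s'|\le C$ from~\eqref{Ass}); estimating the off-diagonal coupling of the marginal channel $m=1$ to the channels $m\ge2$ through the shrinking twist (heuristically only a lower-order negative shift, but it must be shown not to consume the $O(1/s)$ gain); and, underlying everything, the weak-coupling behaviour --- typical of two-dimensional Schr\"odinger operators --- of a perturbation that is strong in supremum norm yet supported on a set of shrinking (logarithmically small) capacity. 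Should the energy/eigenvalue method prove too lossy, an alternative would be to track the evolution directly on the marginal channel by means of spectral projections, in the spirit of the programme of~\cite[Sec.~6]{KZ1}.
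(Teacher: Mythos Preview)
The statement under discussion is a \emph{Conjecture} in the paper; the paper offers no proof, only motivation (criticality vs.\ subcriticality of $T_D^{\Omega_0}$ vs.\ $T_D^\Omega$, and references to~\cite[Sec.~6]{KZ1} and~\cite[Conj.~1]{FKP}). So there is nothing to compare your proposal against --- and, to your credit, you acknowledge this yourself (``is why the statement is only conjectural''). What you have written is not a proof but a plausible programme: rerun the self-similar method of Section~\ref{Sec.proof} for the shifted operators $H_D^\Omega$, identify the asymptotic eigenvalue $\tfrac12$ in the straight case, and argue that in the curved case the lowest eigenvalue $\mu_s$ of $\mathcal{L}_s$ exceeds $\tfrac12$ by at least $c/s$, which after integration would give the logarithmic improvement. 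This is very much in the spirit of the paper's approach and of~\cite{KZ1}.

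The genuine gaps, which you correctly flag, are: (i) making rigorous the lower bound $\mu_s\ge\tfrac12+c/s$ --- your localization/capacity heuristic is reasonable but the competition between the scaled-up local gap and the logarithmic cost of the cut-off has not been shown to resolve in the right direction with uniform constants; (ii) controlling the coupling of the marginal $m=1$ channel to $m\ge2$ through $\theta_s'\partial_\varphi$. There is also a technical point you pass over: the ground state $\phi_0(\rho,\varphi)=\Euler^{-\rho^2/8}\sin(\varphi/(2a))$ does \emph{not} lie in $\Dom(l)$ (the term $\|\partial_\varphi\phi_0/\rho\|_\Hilbert^2$ diverges logarithmically at $\rho=0$), so the very definition of $\mathcal{L}_s$ in the critical case requires care --- the Hardy-shifted form $\tilde l$ is only closable on a larger domain, and the spectral theory you invoke (compact resolvent, discrete spectrum, Proposition~\ref{Prop.spec}-style computation) must be redone in that setting. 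This domain issue is characteristic of critical operators and is part of what makes the conjecture hard.
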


A similar conjecture can be stated for 
the heat kernels of~$H_D^{\Omega}$ and~$H_D^{\Omega_0}$,
\cf~\cite[Conj.~1]{FKP}.

\subsection*{Acknowledgment}
%
I thank Michiel van den Berg for questions 
that led me to start this work, 
fruitful discussions and useful remarks on a previous 
version of this manuscript.
I am also indebted to Cristian Cazacu and Martin Kolb for valuable comments.
The work was partially supported
by the project RVO61389005 and the GACR grant No.\ 14-06818S.
The award from the \emph{Neuron fund for support of science},
Czech Republic, May 2014, is also acknowledged.

%
\bibliography{bib}
\bibliographystyle{amsplain}

\vfill
\begin{figure}[ht]
\begin{center}
\begin{tabular}{ccc}
\includegraphics[width=0.3\textwidth]{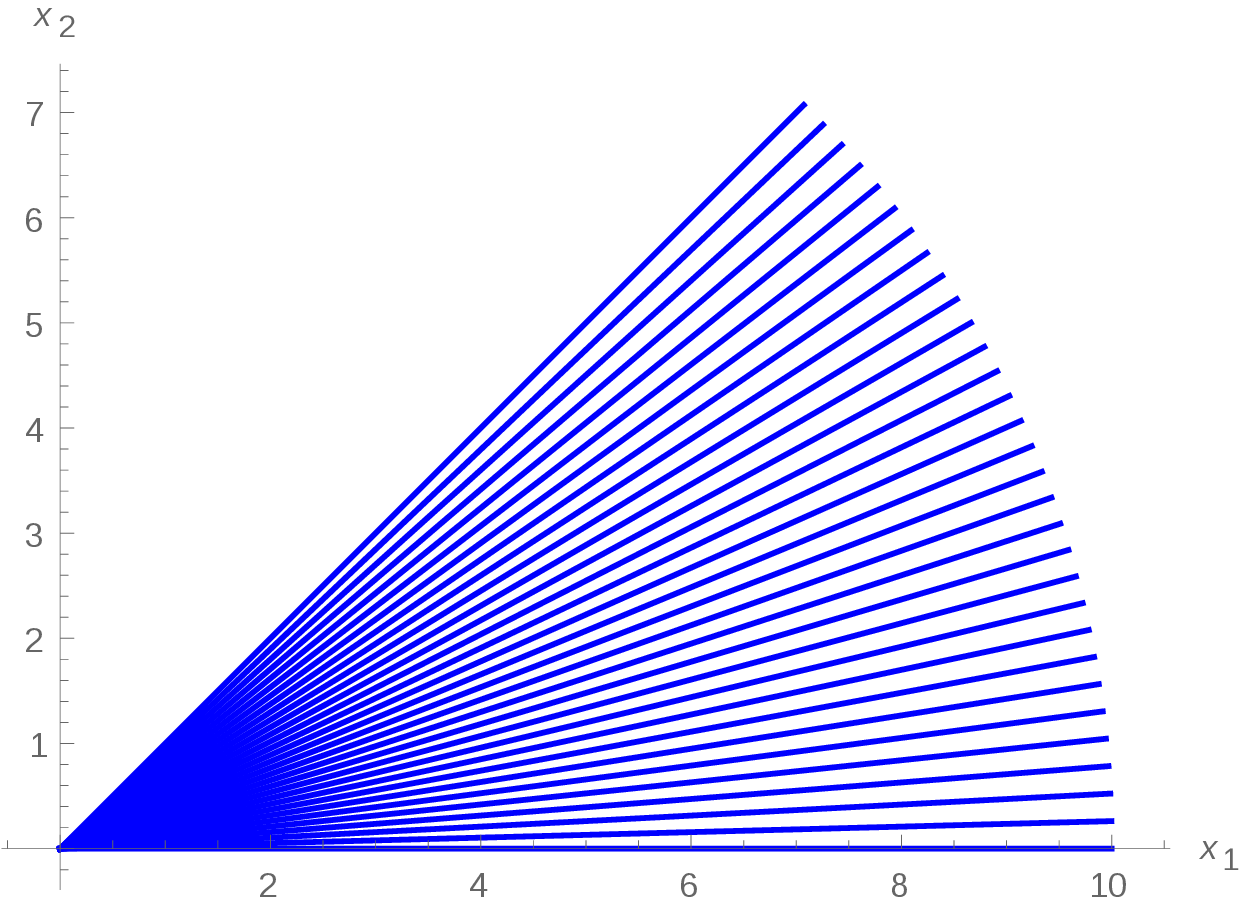}
& \includegraphics[width=0.3\textwidth]{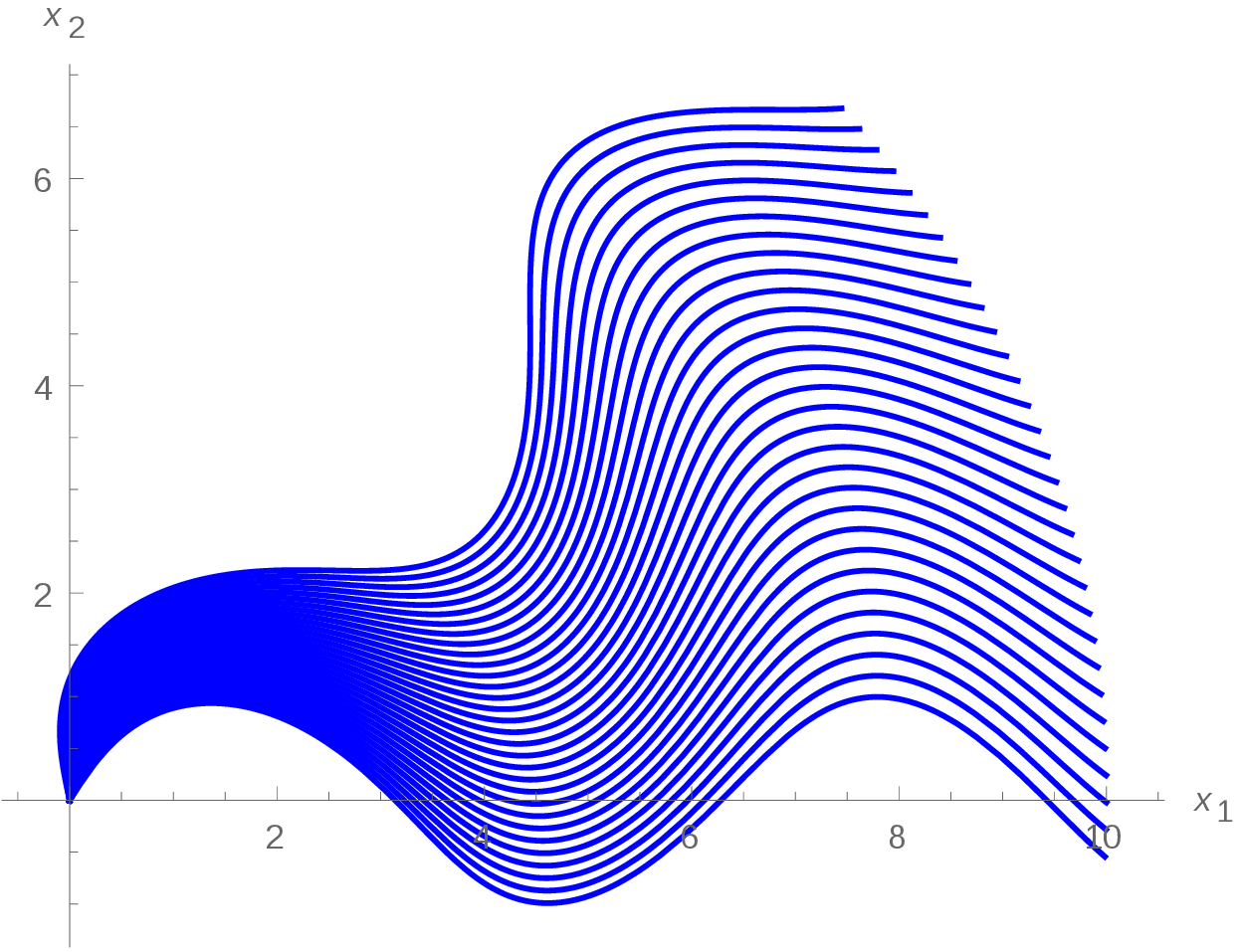}
& \includegraphics[width=0.3\textwidth]{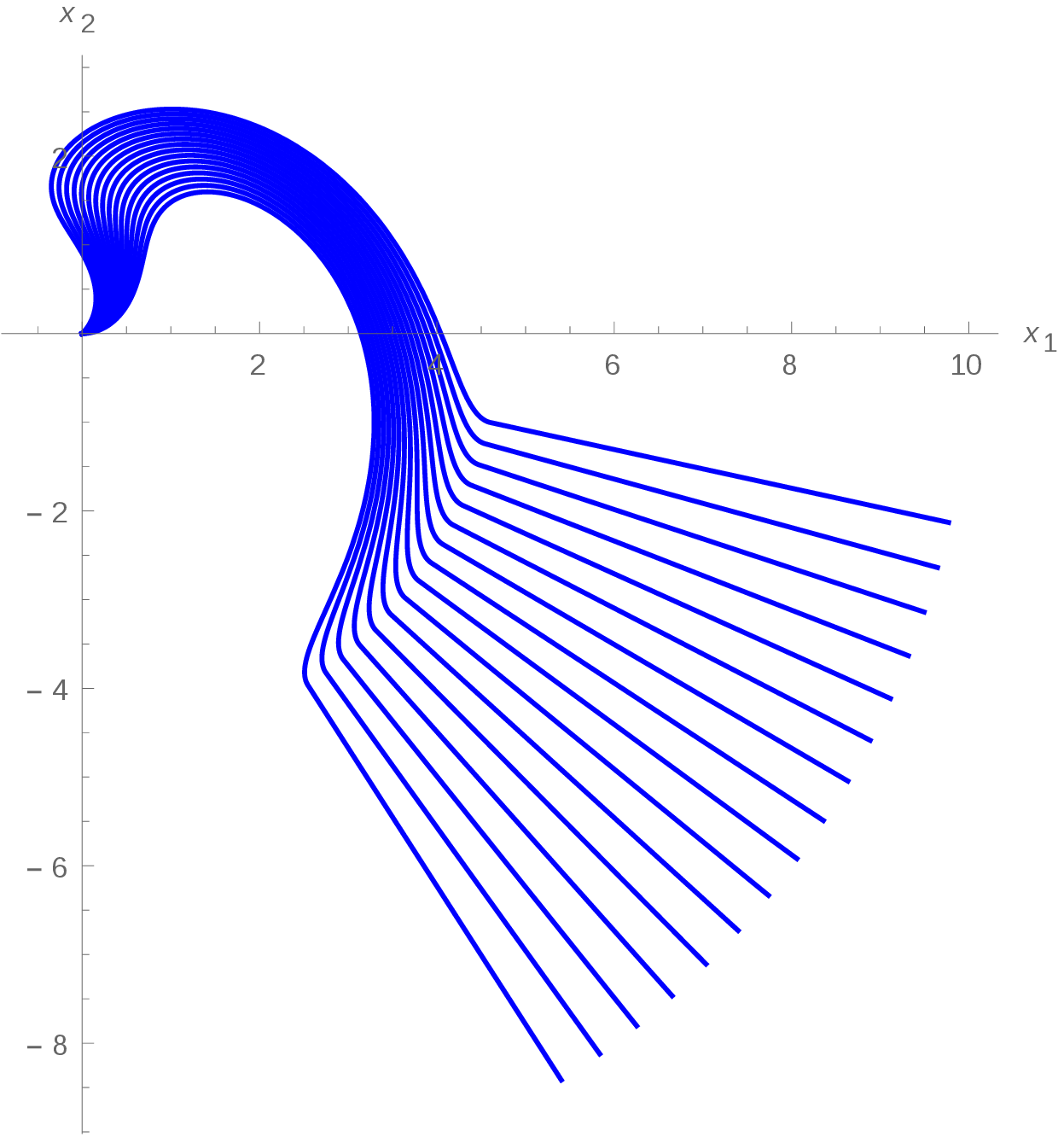}
\\
{\footnotesize (a) \ $\theta(r)=0$}
& {\footnotesize (b) \ $\theta(r)=\sin(r)/r$}
& {\footnotesize (c) \ $\theta(r)=
\begin{cases}
\sin(r) & \mbox{if} \quad r \leq 3\pi/2
\\
-1 & \mbox{if} \quad r > 3\pi/2
\end{cases}
$}
\\
\includegraphics[width=0.3\textwidth]{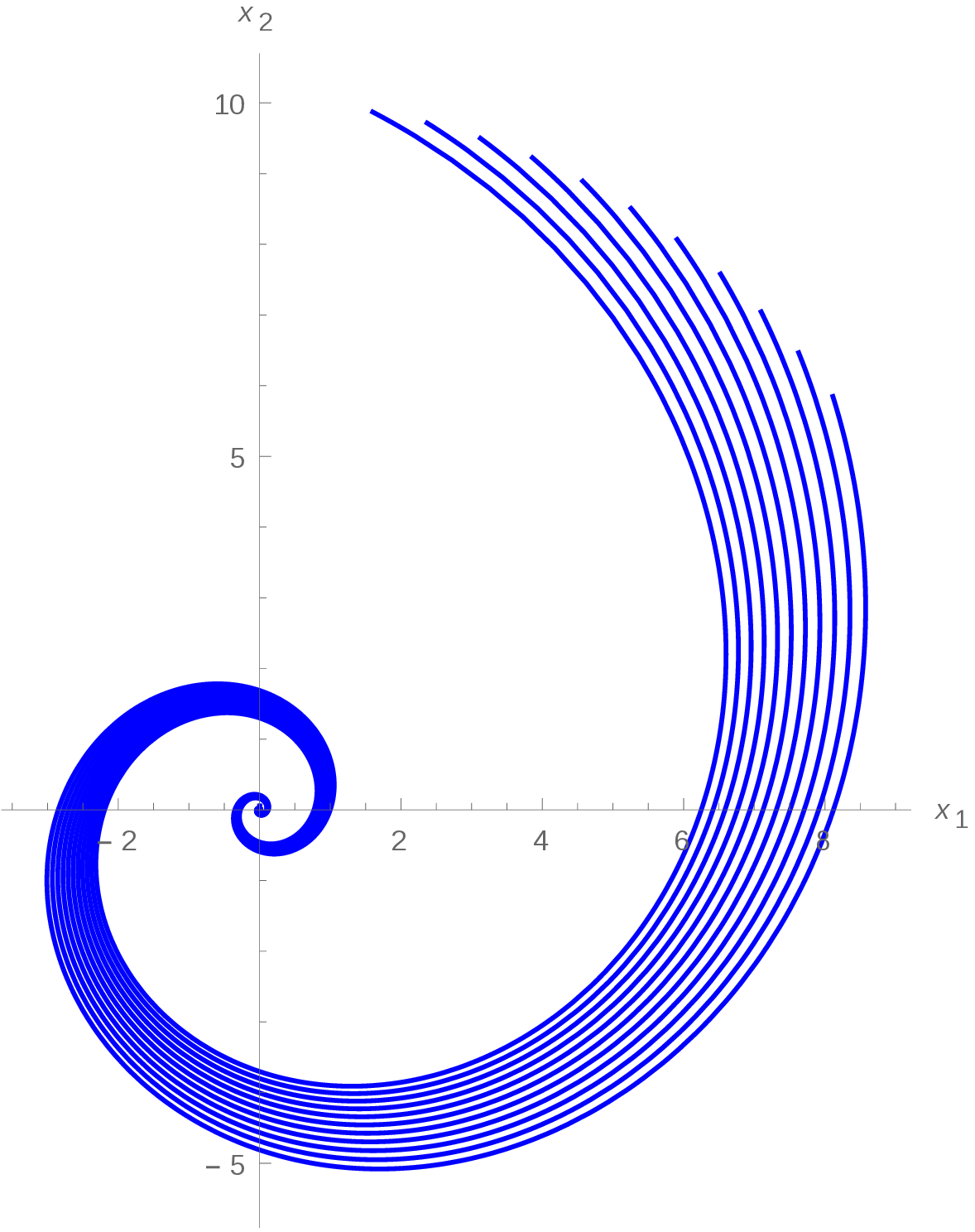}
& \includegraphics[width=0.3\textwidth]{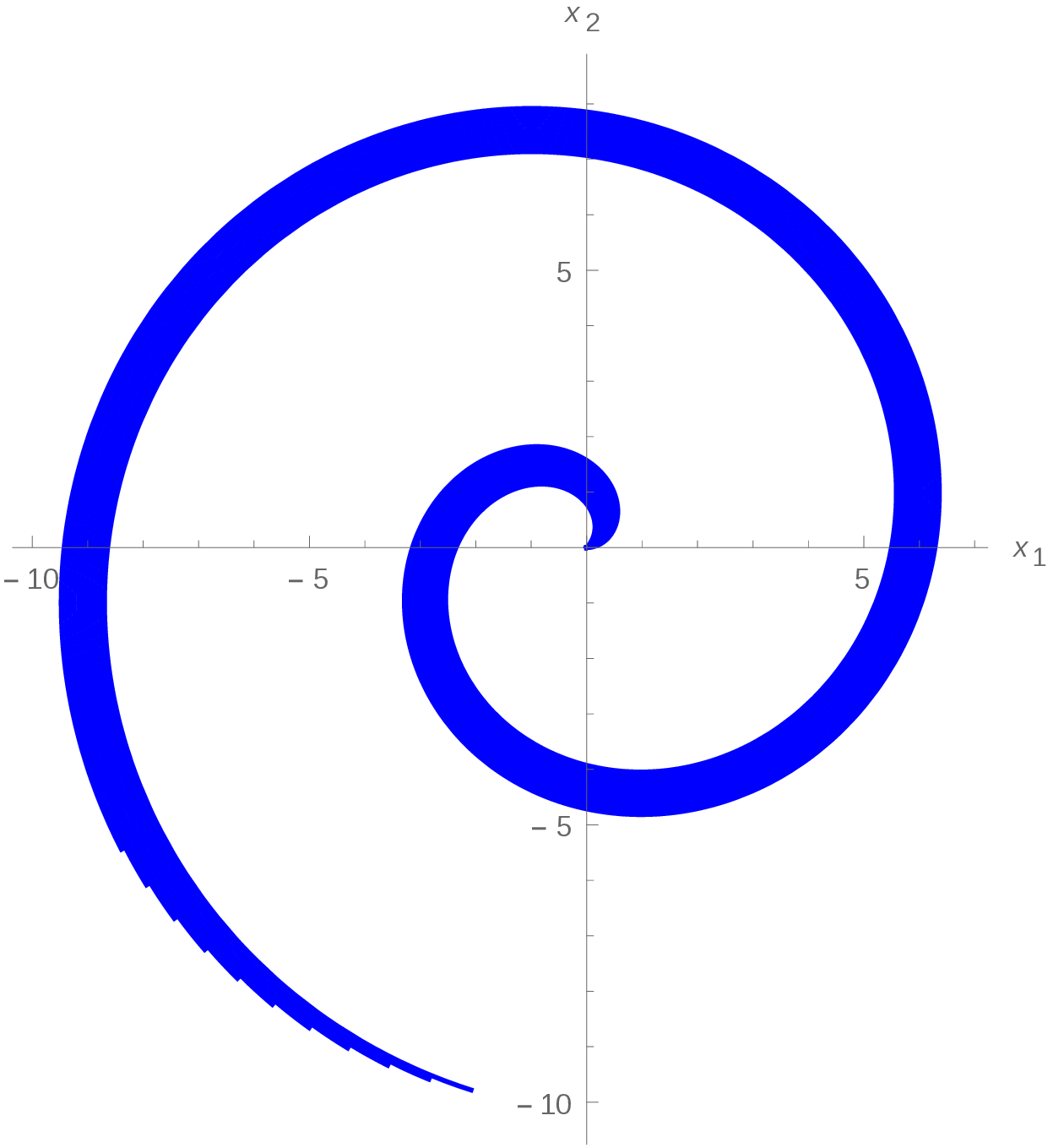}
& \includegraphics[width=0.3\textwidth]{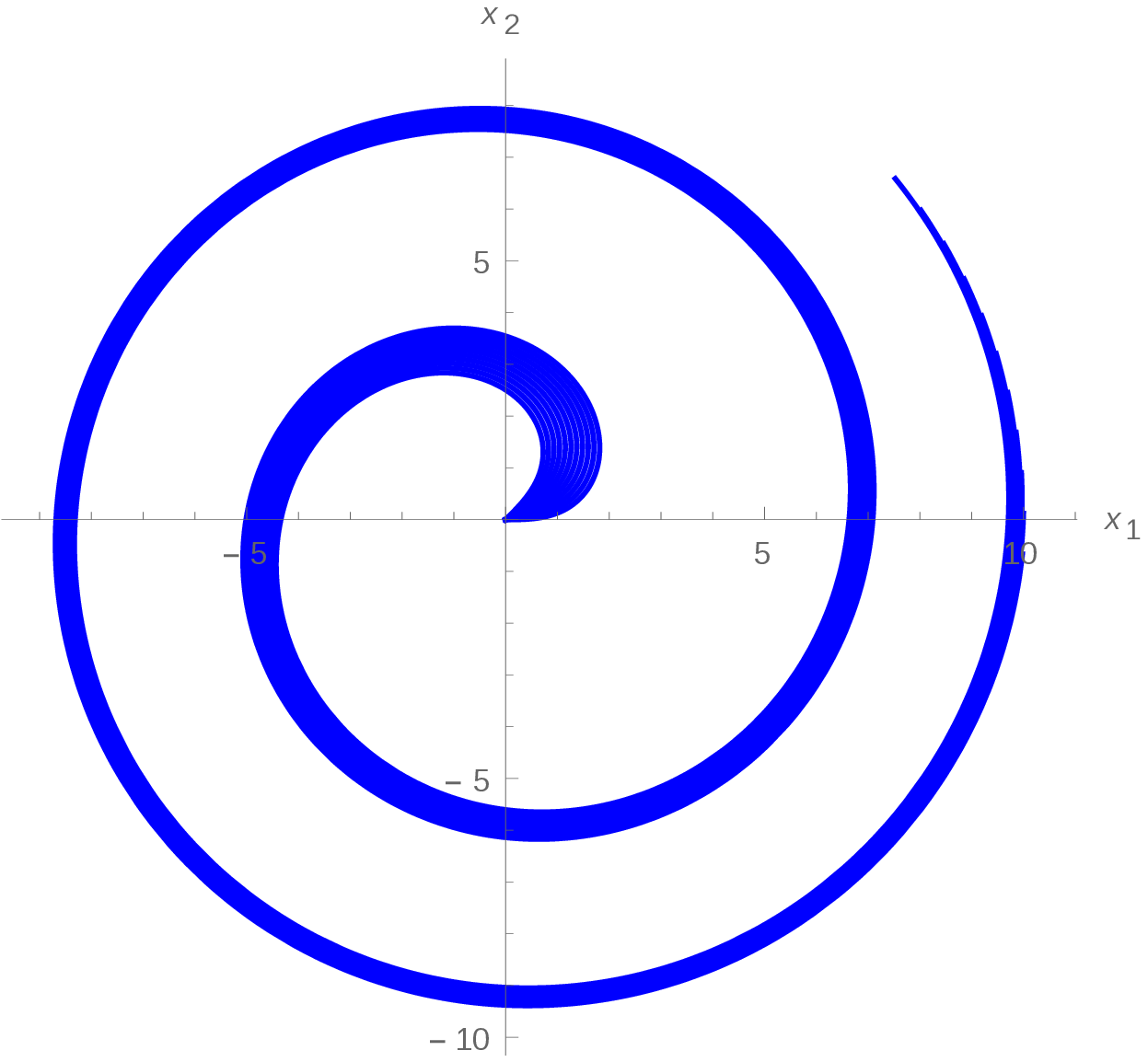}
\\
{\footnotesize (d) \ $\theta(r)=3 \log(r)$}
& {\footnotesize (e) \ $\theta(r)=r$}
& {\footnotesize (f) \ $\theta(r)=r^2/8$}
\end{tabular}
\end{center}
\caption{Examples of curved wedges of opening angle $\pi/4$.
Domains (a)--(d) are all quasi-conical and satisfy
hypotheses~\eqref{local} and~\eqref{Ass} of this paper
(moreover, $\theta'$ is compactly supported for~(a) and~(c)).
Wedges~(e) and~(f) are examples of 
unbounded quasi-cylindrical and quasi-bounded domains, respectively,
that are not considered in this paper.}\label{Fig}
\end{figure}

\end{document}